\def\TKK{\mbox{\rm TKK}}
\def\uTKK{\widehat{\mbox{\rm TKK}}}
\def\Ldecomp{L_{-1}\oplus L_{0}\oplus L_{1}} 
\def\alphadecomp{\alpha_{-1}+\alpha_{0}+\alpha_{1}} 
\def\Lthr{{\mathcal{LA}}_{\mbox{\rm 3-gr}}} 
\def\Lthrinv{{\mathcal{LA}}_{\mbox{\rm 3-gr}}^\varepsilon} 
\def\LAone{{\mathcal{LA}}_{{\mbox{A}_1-\mbox{gr}}}}
\def\sltwo{\mathfrak{sl}_2}
\def\ad{\mbox{\rm ad}}
\def\prP{(P_{-}, P_{+})} 
\def\prPop{(P_{+}, P_{-})} 
\def\JP{\mathcal{J\!P}}
\def\JPinvol{\mathcal{J\!P}^\varepsilon} 
\def\JTS{\mathcal{J\!T\!S}}
\def\JA{\mathcal{J\!A}}
\def\JAid{\mathcal{J\!A}}
\def\JPinvert{\mathcal{J\!P}^{\rm inv}}
\def\Jmult{}
\def\prJTS{{\mathcal P}_{\mathcal T}} 
\def\dbldT{(T, T)} 
\def\ex{\kappa}
\def\dbldS{(S, S)} 
\def\prJA{{\mathcal P}_{\mathcal J\!A}} 
\def\dbldJ{(J, J)} 
\def\forJP{{\mathcal F}_{\mathcal P}} 
\def\forJTS{{\mathcal F}_{\mathcal T}} 
\def\forJA{{\mathcal F}_{\mathcal J\!A}} 
\def\gammaP{\gamma}
\def\gammaT{\gamma}
\def\gammaJA{\gamma}
\def\Z{{\Bbb Z}}
\def\Hom{\mbox{Hom}}
\def\Kr{\mbox{Ker}}
\def\ins{{\mathfrak{instr}}}
\def\span{{\rm span}}
\def\Cen{\mbox{\rm Center}}
\def\U{{\mathfrak{A}}}
\def\frs{{\mathfrak s}}
\def\inder{{\mathfrak{inder}}}
\def\id{\mbox{\rm id}}
\newcommand{\beq}{\begin{equation}}
\newcommand{\eeq}{\end{equation}}
\newtheorem*{theorema}{Theorem A}
\newtheorem*{theoremb}{Theorem B}
\newtheorem*{theoremc}{Theorem C}
\newtheorem{lm}{Lemma}[section]
\newtheorem{thm}[lm]{Theorem}
\newtheorem{rem}[lm]{Remark}
\newtheorem{cor}[lm]{Corollary}
\font\tenmsb=msbm10 \textfont\msbfam=\tenmsb \font\sevenmsb=msbm7
\font\fivemsb=msbm7
\def\Bbb{\fam\msbfam \tenmsb}
\def\sset {\subseteq}
\def\to{\rightarrow}
\def\bZ{{\mathbb Z}} 
\title{Categories of Jordan Structures and Graded Lie Algebras}
\author{D.\,M.\,Caveny and O.\,N.\,Smirnov}
\address{Department of Mathematics\\
College of Charleston\\Charleston SC 29424\\U.S.A.}
\date{April 6, 2011}
\begin{document}
\vspace{-1.2in}
\maketitle
\begin{abstract}
In the paper we describe the subcategory of the category of $\Z$-graded Lie algebras which is equivalent to the category of Jordan pairs via a functorial modification of the TKK construction. For instance, we prove that $L = L_{-1} \oplus L_0 \oplus L_1$ can be constructed from a Jordan pair if and only if $L_0=[L_{-1},L_1]$ and the second graded homology group $H_2^{\rm gr} (L)$ is trivial. Similar descriptions are obtained for Jordan triple systems and Jordan algebras. New functorial versions of the TKK construction are given for pairs and algebras.
\end{abstract}

\section{Preliminaries.}
Strong connections between Lie and Jordan algebras were discovered in
the 1960s by Tits, Kantor, and Koecher. Independently and almost
simultaneously they introduced three versions of a construction, known
presently as TKK. Its importance was recognized immediately, as TKK
allows a beautiful and fruitful interplay between the theory of Lie
algebras and the theory of Jordan algebras~\cite{M}. At the same time
the original  TKK construction does not provide a functor for the
categories at hand.

There are two ways to address this. The first is to restrict the
considered objects or morphisms as was done by Koecher and Kantor and, more
recently, by Kac in~\cite{K} and Bertram in~\cite{B}. The second is to
modify the construction. These modifications have been available for
some time even in more general settings (see~\cite{AG} or~\cite{BZ}).
One can also arrive at a functorial modification of TKK by considering
the universal central extension of TKK as was done in~\cite{BN}
or~\cite{T}. It appears, however, that the properties of the functor
have never been studied. These studies are the main goal of our paper.

The categories which naturally arise from the modified construction are
the category of $\Z$-graded Lie algebras and related categories, on one
hand, and the categories of Jordan algebras, triple systems, and pairs
on the other. All three functors which appear from the modified TKK are
full and faithful and are left adjoints of natural forgetful functors.
The main result of our paper contains descriptions of
the image of the functor from Jordan pairs to graded Lie algebras in
terms of graded homology and cohomology groups.

\begin{theorema}
The category of Jordan pairs is equivalent to the category of $3$-graded
Lie algebras $L = L_{-1} \oplus L_0 \oplus L_1$ such that
$L_0=[L_{-1},L_1]$ and $L$ satisfies one of the equivalent conditions:
\begin{itemize}
\item[(i)]  $H_2^{\rm gr} (L) = 0$
\item[(ii)] $H^2 _{\rm gr} (L,M) = 0$ for every module $M$ with the
    trivial grading $M = M_0$.
\end{itemize}
\end{theorema}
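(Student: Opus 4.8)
The plan is to realize $\JP$ as the essential image in $\Lthr$ of the functorial TKK construction and to read off this image from the adjunction properties already established. Recall that $\uTKK\colon\JP\to\Lthr$ is full and faithful and is left adjoint to the forgetful functor $\Psi\colon\Lthr\to\JP$, $\Psi(L)=(L_1,L_{-1})$ (with the triple products inherited from the bracket), and that $\uTKK(P)$ is generated by its degree $\pm1$ parts, so that $\uTKK(P)_0=[\uTKK(P)_{-1},\uTKK(P)_1]$. Since $\uTKK$ is full and faithful the unit $\eta$ is an isomorphism, and $\JP$ is equivalent to the full subcategory of $\Lthr$ on the objects isomorphic to some $\uTKK(P)$; so the whole content of the theorem is the identification of this subcategory. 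In both directions I would study, for $L\in\Lthr$, the counit $\varepsilon_L\colon\uTKK(\Psi(L))\to L$, observing first that the triangle identity gives $\Psi(\varepsilon_L)=\eta_{\Psi(L)}^{-1}$, so $\varepsilon_L$ is an isomorphism on the degree $\pm1$ parts.

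\emph{Sufficiency.} Assume $L_0=[L_{-1},L_1]$ and (ii), and put $P=\Psi(L)$. Since $\Im\varepsilon_L$ contains $L_{\pm1}$ and hence $[L_{-1},L_1]=L_0$, the counit $\varepsilon_L$ is onto; since it is injective in degrees $\pm1$, its kernel lies in degree $0$, whence $[\ker\varepsilon_L,\uTKK(P)_{\pm1}]\subseteq\ker\varepsilon_L\cap\uTKK(P)_{\pm1}=0$ and, as $\uTKK(P)$ is generated in degrees $\pm1$, $\ker\varepsilon_L$ is central. Thus $0\to\ker\varepsilon_L\to\uTKK(P)\xrightarrow{\varepsilon_L}L\to0$ is a graded central extension by the trivially graded module $\ker\varepsilon_L$, so by (ii) it splits: there is a graded homomorphism $s\colon L\to\uTKK(P)$ with $\varepsilon_L s=\id_L$. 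Applying $\Psi$ and using $\Psi(\varepsilon_L)=\eta_P^{-1}$ gives $\Psi(s\varepsilon_L)=\id$; but then $s\varepsilon_L$ is an endomorphism of $\uTKK(P)$ that is the identity on the generating degrees $\pm1$, so $s\varepsilon_L=\id$ and $\varepsilon_L$ is an isomorphism. Hence $L\cong\uTKK(P)$ lies in the image of $\uTKK$.

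\emph{Necessity and (i)$\Leftrightarrow$(ii).} For the equivalence, $L_0=[L_{-1},L_1]$ forces the degree-zero part of $H_1^{\rm gr}(L)=L/[L,L]$ to vanish, so the universal coefficient theorem for graded Lie (co)homology, evaluated at a trivially graded module $M$, collapses to a natural isomorphism $H^2_{\rm gr}(L,M)\cong\Hom(H_2^{\rm gr}(L),M)$; hence (i) and (ii) are equivalent. For necessity let $L=\uTKK(P)$; then $L_0=[L_{-1},L_1]$ by construction. Given any graded central extension $0\to M\to E\xrightarrow{\pi}L\to0$ with $M$ trivially graded, $\pi$ is an isomorphism in degrees $\pm1$, so $\Psi(\pi)$ is an isomorphism, and the composite $P\xrightarrow{\eta_P}\Psi(L)\xrightarrow{\Psi(\pi)^{-1}}\Psi(E)$ transposes across the adjunction to $\hat g\colon\uTKK(P)\to E$ with $\Psi(\pi\hat g)=\id$, whence $\pi\hat g=\id$ exactly as above, i.e. the extension splits. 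Therefore $H^2_{\rm gr}(L,M)=0$ for all trivially graded $M$, giving (ii) and hence (i).

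The step I expect to cause the most trouble is the homological bookkeeping behind (i)$\Leftrightarrow$(ii): the algebras here are not perfect (the parts $L_{\pm1}$ need not lie in $[L,L]$), so one must isolate the restricted class of graded central extensions whose kernel is concentrated in degree $0$, confirm that this class still admits a universal object with kernel $H_2^{\rm gr}(L)$, and verify the universal coefficient statement in that graded setting — everything else reducing to the formal adjunction-and-generation arguments above. A secondary point that deserves to be recorded, rather than being genuinely hard, is that $\uTKK(P)$ is generated in degrees $\pm1$ and that $\Psi(L)$ carries a Jordan pair structure for every $L\in\Lthr$; both follow from the construction of $\uTKK$.
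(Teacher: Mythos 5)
Your argument is sound and its backbone coincides with the paper's: $\uTKK$ is a full and faithful left adjoint of $\forJP$, the degree $\pm1$ parts generate $\uTKK(P)$, and the image is cut out by the splitting of graded central extensions with kernel in degree $0$. Where you diverge is in how the two halves are organized. The paper factors the image description through the formalism of universal central $0$-extensions and centrally $0$-closed algebras (Theorems~\ref{TuTKK} and~\ref{TrecU}, Corollary~\ref{catequivJP}) and then proves the homological characterization (Theorem~\ref{homolal}) by explicit chain-level work: ``all central $0$-extensions split'' $\Rightarrow H_2^{\rm gr}(L)=0$ is shown by identifying $[L_{-1},L_1]$ with $\langle L_{-1},L_1\rangle$ and checking $A(L_{-1}\otimes L_1)\subseteq \Im(\delta_2)$, and (ii) is deduced by exhibiting the cobounding map $\tau_\sigma$ directly. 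You instead argue directly with the counit $\varepsilon_L$ (which is essentially the paper's $\upsilon=\widehat{\id}$), split arbitrary graded central extensions of $\uTKK(P)$ by transposing $\Psi(\pi)^{-1}\eta_P$ across the adjunction, and obtain (i)$\Leftrightarrow$(ii) from a graded universal coefficient theorem. This buys a shorter, more formal proof that avoids the explicit cycle/boundary computations and the universal-extension vocabulary, at the cost of outsourcing exactly the points you flag: the bijection between classes in $H^2_{\rm gr}(L,M)$ (for $M=M_0$, which is automatically a trivial module when $L_0=[L_{-1},L_1]$) and graded central $0$-extensions, the existence of graded linear sections, and the UCT itself --- all of which need the standing hypothesis of Section~\ref{homi} that $k$ is a field of characteristic $\neq 2,3$, which you never state. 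Two small corrections: $H_1^{\rm gr}(L)$ is the degree-zero component of $L/[L,L]$, namely $L_0/([L_{-1},L_1]+[L_0,L_0])$, not all of $L/[L,L]$; and over a field the UCT collapse needs no $H_1$ input at all, since for $M=M_0$ the graded cochain complex is just $\Hom_k\bigl((\wedge^{\bullet}L)_0,M\bigr)$ and there is no Ext correction --- the $H_1$ remark only matters if you wanted to work over a more general base, where further projectivity hypotheses would be required. The paper's hands-on route, by contrast, proves the splitting $\Leftrightarrow$ homology equivalences without invoking UCT and yields the intermediate Corollaries~\ref{catequivJP}--\ref{catequivJA} that are reused for Theorems~B and~C.
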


As a corollary we have similar results for Jordan triple systems and
Jordan algebras. The latter description has an interesting feature as it
involves regular rather than graded homology and cohomology groups.

\begin{theoremc}
The category of unital Jordan algebras is equivalent to the category of
A$_1$-graded Lie algebras $L$ satisfying one of the equivalent
conditions:
\begin{itemize}
\item[(i)]  $H_2(L) = 0$
\item[(ii)] $H^2(L,M) = 0$ for every trivial module $M$.
\end{itemize}
\end{theoremc}

This paper is organized as follows. The categories of Jordan objects and
corresponding categories of graded Lie algebras are described in
Sections~\ref{Jcats} and~\ref{Lcats}. We recall the original TKK
construction and its versions for pairs and triples in
Section~\ref{TKK}. In Section~\ref{univ}, we introduce a modified
version of TKK and prove its universal properties. The final section
contains various descriptions of images of the functors.

The part of the present paper related to Jordan pairs is parallel to the content from~\cite{S}. It is a 3-graded version of the theory of central extensions of Lie algebras (e.g., see~\cite[Sect. 7.9]{W}) while the latter is a  $\Z_2$-graded version of that theory. Moreover, since the category of Jordan pairs is
equivalent to the category of polarized Lie triple systems (see~\cite[Sect.III.3]{B}), one can apply the results on Lie triple systems from~\cite{S} to invoke our results on Jordan pairs. We found, however, that the direct approach suitably modified works better than application of Lie triple systems results providing shorter arguments that are also self-contained. On the other hand the content related to triple systems and algebras has no analogs in~\cite{S}.

All of our objects (algebras, vector spaces, and modules) will be
defined over a unital commutative ring $k$ that has no $2$- or
$3$-torsion. In the text we often consider pairs of spaces/modules
indexed by $\pm$. Throughout, $\sigma$ stands for an element from
$\{-,+\}$ with $-\sigma$ defined naturally.

\section{Categories of Jordan objects} \label{Jcats}

In this section we describe three categories of Jordan objects which arise naturally in the context of the TKK construction.

\subsection{Jordan Pairs} \label{Jpairs}
A {\it (linear) Jordan pair} is a pair of $k$-modules $P=\prP$ with two
trilinear maps $\{\ ,\ ,\ \}_{\sigma}:P_{\sigma}\times P_{-\sigma}\times P_{\sigma}\to
P_{\sigma}$ such that for $a,c\in P_{\sigma}$ and $b,d\in P_{-\sigma}$, one has
\beq
\{a,b,c\}_\sigma =\{c,b,a\}_\sigma \mbox{ \ and \ } [V_{a,b},V_{c,d}]=V_{V_{a,b}c,d}- V_{c,V_{b,a}d} \label{jp}
\eeq
for $V_{a,b}c:=\{a,b,c\}_\sigma$. When it is clear which trilinear map
applies, we often drop the subscript.  For two Jordan pairs $P=\prP$ and
$Q=(Q_{-},Q_{+})$, a pair of linear maps $\gamma=(\gamma_-,\gamma_+)$,
$\gamma_{\sigma}:P_{\sigma}\to Q_{\sigma}$, is called a {\it Jordan pair
homomorphism} provided
$\gamma_{\sigma}(\{a,b,c\})=\{\gamma_{\sigma}(a),\gamma_{-\sigma}(b),
\gamma_{\sigma}(c)\}$ for $a,c\in P_{\sigma}$ and $b\in P_{-\sigma}$.
We denote the category of all Jordan pairs and their homomorphisms by
$\JP$.

Other Jordan objects, namely Jordan triple systems and Jordan algebras,
can be viewed as Jordan pairs with additional structure.

\subsection{Jordan Triple Systems as Jordan Pairs} \label{Jts}
A {\it Jordan triple system} is a $k$-module $T$ together with a
trilinear map $\{\ ,\ ,\ \}:T\times T\times T\to T$ satisfying
(\ref{jp}) for $a,b,c,d \in T$.  A homomorphism of two
Jordan triple systems $T$ and $S$ is a linear map $\gamma:T\to S$
satisfying the equation
$\gamma(\{a,b,c\})=\{\gamma(a),\gamma(b),\gamma(c)\}$ for $a,b,c\in T$.

The category $\JTS$ of all Jordan triple systems with their
homomorphisms is equivalent to the category $\JPinvol$ of Jordan pairs
with involutions which is described below (see~\cite[Sect.1.13]{L}).

For a Jordan pair $P=\prP$ with the trilinear products $\{ \ , \ , \
\}_{\pm}$, the pair of spaces $P^{\rm op}=\prPop$ with the products $\{ \
, \ , \ \}^{\rm op}_{\pm}:= \{ \ , \ , \ \}_{\mp}$ is also a Jordan
pair, termed the pair {\it opposite} to $P$.
A Jordan pair homomorphism $\varepsilon:P\to P^{\rm op}$ is called an
{\it involution of $P$} provided $\varepsilon_{-\sigma} \circ
\varepsilon _{\sigma} =\mbox{\rm id}_{P_\sigma}$ for $\sigma=\pm$. The
objects of the category $\JPinvol$ are  pairs
$(P,\varepsilon_{\mbox{\it\tiny P}})$ consisting of a Jordan pair $P$
and its involution $\varepsilon_{\mbox{\it\tiny P}}$.

If $(P,\varepsilon_{\mbox{\it\tiny P}})$ and
$(Q,\varepsilon_{\mbox{\it\tiny Q}})$ are two Jordan pairs with
involutions, a homomorphism $\gamma:P\to Q$ is called {\it involutary}
if it  preserves the involutions, i.e., if $\varepsilon_{\mbox{\it\tiny
Q}} \gamma=\gamma ^{\rm op} \varepsilon_{\mbox{\it\tiny P}}$, where
$\gamma ^{\rm op}: P^{\rm op} \to Q^{\rm op}$ is simply $\gamma^{\rm op}
= (\gamma_{+}, \gamma_{-})$. The involutary homomorphisms constitute the
collection of morphisms in $\JPinvol$.

Finally, an equivalence between the categories $\JTS$ and $\JPinvol$ is
provided by the functor $\prJTS$ which sends a triple system $T$ with
the product $\{ \ , \ , \ \} $ to the pair $\dbldT$ with the products
$\{ \ , \ , \ \}_{\pm} := \{ \ , \ , \ \}$ and with the {\it canonical
involution} $\ex_{\pm}={\rm id}_T$. A Jordan triple system homomorphism
$\gamma:T \to S$ is sent to the involutary Jordan pair homomorphism
$\prJTS \gamma=(\gamma,\gamma): \dbldT \to \dbldS$.

\subsection{Jordan Algebras as Jordan Pairs} \label{Ja}

A {\it (linear) Jordan algebra} is a $k$-module $J$ with a bilinear
product $a \Jmult b$ satisfying the following identities
\beq
a \Jmult b = b \Jmult a \quad \mbox{ and } \quad (a^2 \Jmult b) \Jmult a
= a^2 \Jmult (b \Jmult a). \label{JA1}
\eeq
We will work with the category of unital Jordan algebras, where the
algebras as well as homomorphisms are unital.  We denote this category by
$\JAid$. It is equivalent to the category $\JPinvert$ of Jordan pairs
with fixed invertible elements, defined in~\cite[Sect.1.10]{L} as follows.

For a Jordan pair $P=\prP$ an element $b \in P_{\sigma}$ is called {\it
invertible} if the map $P_{-\sigma} \to P_{\sigma}$ defined by $a
\mapsto \{b,a,b\}_{\sigma}$ is invertible. An object of $\JPinvert$ is a
pair $(P,b)$ where $P$ is a Jordan pair and $b$ is an invertible element
from $P_+$. A morphism $\varphi:(P,b)\to (Q,d)$ is defined to be a
Jordan pair homomorphism from $P$ to $Q$ such that $\varphi(b)=d$.

To describe an equivalence functor $\prJA$ from $\JAid$ onto $\JPinvert$
we note that any Jordan algebra $J$ considered with a {\it triple
product} $\{ a, b, c \} := (a \Jmult b) \Jmult c + a \Jmult (b \Jmult c) - b
\Jmult (c \Jmult a) \label{a2ts}$ is a Jordan triple system and hence $\prJTS J=\dbldJ$ is a Jordan pair.
Moreover, the identity element $1\in J=(\prJTS J)_+$ is an invertible
element of the pair. The functor $\prJA$ sends $J$ to $(\dbldJ,1)$ and
the Jordan algebra homomorphism $\gamma$ to the pair homomorphism $\prJA
\gamma = (\gamma , \gamma)$.

\section{Categories of Lie Algebras}\label{Lcats}

In this section we review the categories of Lie algebras related to the
aforementioned Jordan structures.

\subsection{3-graded Lie Algebras and Jordan Pairs} \label{3gr}

Let $L=\bigoplus_{i\in \Z} L_{i}$ be a $\Z$-graded Lie algebra, that is
$L$ is a direct sum of indexed $k$-modules $\{L_i:i\in \Z\}$ such that
$[L_i,L_j]\sset L_{i+j}$. If $L_i=0$ for $|i|>1$, we write $L=\Ldecomp$
and say that the algebra $L$ is {\it $3$-graded}.
We will work with the category $\Lthr$ of all $3$-graded Lie
algebras and their graded homomorphisms. For every such morphism
$\alpha:L \to K$ we have $\alpha (L_i) \sset K_i$ for $i \in \{0,
\pm1\}$ and we write $\alpha=\alphadecomp$ where
$\alpha_i=\alpha|_{L_i}$.
It is well-known that for a $3$-graded Lie algebra $L = \Ldecomp$ the
pair $(L_{-1}, L_1)$ is Jordan with respect to the operations $\{a, b,
c\}_\sigma := [[a,b],c]$, with $a,c \in L_{\sigma 1}$ and $b \in
L_{-\sigma 1}$. It is also clear that for any graded Lie homomorphism
$\alpha$ the pair $(\alpha_{-1},\alpha_1)$ is a Jordan homomorphism. In
other words, we have a forgetful functor $\forJP$ from the category of
3-graded Lie algebras $\Lthr$ to the category of Jordan pairs $\JP$.

\subsection{Involutions on 3-graded Lie Algebras and Jordan Triple
Systems} \label{3grInv}

An {\it anti-graded involution} of a $\bZ$-graded Lie algebra $L$ is a
homomorphism $\varepsilon:L \to L$ satisfying $\varepsilon^2 = \id _L$
and $\varepsilon (L_i) \sset L_{-i}$ for $i \in \bZ$.  For $\bZ$-graded
Lie algebras with fixed involutions, $(L,\varepsilon_L)$ and
$(K,\varepsilon_K)$, we say that a homomorphism $\alpha:L \to K$ is {\it
involutary} if it preserves the involutions; that is, if $\alpha
\varepsilon_L = \varepsilon_K \alpha$.  We let $\Lthrinv$ denote the
category whose objects are $3$-graded Lie algebras with fixed
anti-graded involution and whose morphisms are graded involutary
homomorphisms.

If $\varepsilon$ is an anti-graded involution of a 3-graded algebra
$L=\Ldecomp$, then the restriction $\widetilde\varepsilon$ of
$\varepsilon$ onto the pair $(L_{-1}, L_1)$ is a Jordan involution. It
is easy to see that in this case, the Jordan pair with involution
$(\forJP L,\widetilde\varepsilon)$ is isomorphic to the pair $\prJTS
L_1$ via the map $(a,b)\mapsto (\varepsilon(a),b)$ where
$L_1$ is considered as Jordan triple system with the
operation $\{a,b,c\}:= [[a, \varepsilon(b)], c]$. This leads to a
forgetful functor $\forJTS$ from $\Lthrinv$ to the category of Jordan
triple systems $\JTS$. This forgetful functor takes involutary graded
homomorphisms $\alpha$ to Jordan triple system homomorphisms $\forJTS
\alpha = \alpha_{1}$.

\subsection{A$_1$-graded Lie Algebras and Jordan Algebras} \label{a1gr}
The third category of Lie algebras is part of the theory of Lie algebras
graded by root systems initiated by Berman and Moody in~\cite{BM}. The
general definition of root-graded algebra over a field along with
classification theorems can be found in~\cite{BM}, \cite{BZ},
and~\cite{ABG2}. Here, we give its adaptation for the case when the root
system is A$_1$ and $k$ is a commutative ring.

Following~\cite{K}, we say that an ordered triple $\frs=\langle h, e,
f\rangle$ of elements of a Lie algebra is an $\sltwo$-{\it triple} if
one has the following commutator relations:
$$
[e,f]=h,\quad [h,e]=2e, \mbox{ and } [h,f]=-2f.
$$
An A$_1$-{\it graded} Lie algebra is a pair $(L,\frs)$ that consists of
a Lie algebra $L$ and an $\sltwo$-triple $\frs\subseteq L$ such that $L
= \Ldecomp$ where $L_i = \{x \in L: [h,x]=2ix\}$ for $i \in \{0, \pm
1\}$ and $L_0=[L_{-1},L_1]$. All $A_1$-graded Lie algebras together with
Lie algebra homomorphisms sending $\sltwo$-triple to $\sltwo$-triple
element-wise form a category which we denote by $\LAone$.

To relate this category to the previous ones we note that for an
A$_1$-graded algebra $(L,\frs)$ the decomposition $L = \Ldecomp$ is a
3-grading and that $e\in L_1$ is an invertible element of the Jordan
pair $\forJP L$. Thus the pair $\forJP L$ is obtained from the Jordan
algebra $L_1$ considered with operation $a \Jmult b :=[[a,f],b]$
and $\frac{1}{2} e$ is the identity element of this algebra. Further,
for any morphism $\alpha$ of A$_1$-graded algebras, the restriction
$\alpha_1=\alpha|_{L_1}$ is a homomorphism of unital Jordan algebras.
Thus one has the third forgetful functor $\forJA$ from $\LAone$ to the
category of unital Jordan algebras $\JAid$.


\section{Tits-Kantor-Koecher construction}\label{TKK}

In this section we recall the constructions of the structure Lie algebra
$\ins (P)$ and of $\TKK (P)$ for a Jordan pair $P$. We also describe
additional structures arising on these Lie algebras when $P$ is
obtained from a Jordan triple system or a Jordan algebra. Throughout the
paper $\mathfrak{gl}(V)$ stands for the Lie algebra of all linear
transformations on a $k$-module space $V$.

\subsection{Inner Structure Algebra}\label{inner} Let $P=\prP$ be a Jordan pair. For
any pair $(a,b)\in P_{-}\times P_{+}$ we define $\nu(a,b)\in
\mathfrak{gl}(P_-)\oplus \mathfrak{gl}(P_+)$ by $\nu (a,b) := (V_{a,b}, -V_{b,a})$.
Identities~(\ref{jp}) imply that
\beq
[\nu(a,b),\nu(c,d)]=\nu(\nu(a,b)c,d)+\nu(c,\nu(a,b)d) \label{nubracket}
\eeq
for $a,c\in P_{-}$ and $b,d\in P_{+}$, so the span of the $\nu(a,b)$'s
forms a subalgebra of the Lie algebra $\mathfrak{gl}(P_-)\oplus
\mathfrak{gl}(P_+)$ called the {\it inner structure algebra} of $P$ and
denoted by $\ins (P)$.

If $P=\prJTS T$ for a Jordan triple system $T$, we write $\ins (T)$ to
denote $\ins (\prJTS T)$. In this case it is easy to see that the map
$\nu(x,y)\mapsto -\nu(y,x)$ is an automorphism of the Lie algebra $\ins (T)$ of period two. Hence
there is a eigenspace decomposition
\beq\label{DecIns1}
\ins (T)=\ins(T)_{-1}\oplus\ins(T)_{1}.
\eeq
Here $\ins(T)_{1}=\span\{\nu (a,b)-\nu (b,a):a,b\in T\}$ is the
subalgebra of {\it inner derivations of} $T$, denoted usually by
$\inder(T)$.

Furthermore, if $P=\prJA J$ for a unital Jordan algebra $J$, then the
component $\ins(J)_{-1}=\span\{\nu (a,a):a\in J\}$ can be identified
with the submodule $R_J$ of operators of right multiplication  because
$\nu(a,a)=(R_{a^2}, -R_{a^2})$. So the decomposition above admits the
form
\beq\label{DecIns2}
\ins (J)=R_J\oplus\inder(J).
\eeq

\subsection{TKK construction} \label{TKKp}  The following theorem
introduces the celebrated Tits-Kantor-Koecher construction.

\begin{thm} For any Jordan pair $P=\prP$, the $k$-module $P_{-}\oplus
\ins (P)\oplus P_{+}$ together with the bracket
\beq\label{tkk}
[a+X+b,c+Y+d]
=(Xc-Ya)+([X,Y]+\nu(a,d)-\nu(c,b))+(Xd-Yb)
\eeq
for $X,Y\in \ins(P)$, $a, c \in P_-$, and $b,d \in P_+$, is a Lie
algebra.
\end{thm}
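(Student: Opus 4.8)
The plan is to verify directly that the bracket \eqref{tkk} makes $P_{-}\oplus \ins(P)\oplus P_{+}$ into a Lie algebra, i.e.\ that it is $k$-bilinear, alternating, and satisfies the Jacobi identity. Bilinearity is immediate from the bilinearity of $[\ ,\ ]$ on $\ins(P)$, of the action of $\ins(P)$ on $P_{\pm}$, and of $\nu$. For the alternating property one checks on a spanning set: with $a,b$ playing the role of $c,d$ and $X$ the role of $Y$, every summand in \eqref{tkk} is visibly skew, using $\nu(a,b)-\nu(a,b)=0$ in the middle slot; alternatively one can just note the right-hand side is built from commutators and from the expression $\nu(a,d)-\nu(c,b)$, which changes sign under the swap.

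The substantive point is the Jacobi identity. I would exploit the $\Z$-grading: the algebra is $L_{-1}\oplus L_0\oplus L_1$ with $L_{\sigma 1}=P_{\sigma}$ and $L_0=\ins(P)$, and the bracket \eqref{tkk} respects this grading, so it suffices to check the Jacobiator $J(x,y,z):=[[x,y],z]+[[y,z],x]+[[z,x],y]$ vanishes on homogeneous triples. Because $J$ is alternating and $k$-trilinear, only the partition types of the degrees $(\deg x,\deg y,\deg z)$ matter, and degree reasons kill many cases outright (e.g.\ a triple of total degree with absolute value exceeding $2$ forces at least one bracket to land outside the algebra and hence be zero in a way that makes the check trivial, or rather: only triples whose three degrees lie in $\{-1,0,1\}$ occur, and for each the output must land in the right component). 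Concretely the cases to handle are, up to the symmetry of $J$ and up to the sign-symmetry $\sigma\leftrightarrow-\sigma$: $(0,0,0)$, $(0,0,1)$, $(0,1,1)$, $(0,1,-1)$, $(1,1,1)$, $(1,1,-1)$, and $(1,-1,\ast)$ which is subsumed. The case $(0,0,0)$ is just the Jacobi identity in $\ins(P)$, which holds since $\ins(P)$ is a subalgebra of $\mathfrak{gl}(P_-)\oplus\mathfrak{gl}(P_+)$. The cases $(0,0,1)$ and $(0,1,1)$ reduce, after expanding, to identity \eqref{nubracket} together with the fact that $\ins(P)$ acts on $P_{\pm}$; for instance $(0,0,1)$ with $x=X,y=Y,z=d$ unwinds to $[[X,Y],d]-\nu(?,?)\cdots$ and the $\nu$-terms cancel by \eqref{nubracket}.

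The genuinely Jordan-theoretic cases are $(0,1,-1)$, $(1,1,-1)$ (and its mirror $(1,-1,-1)$), and $(1,1,1)$. For $(1,1,1)$, say $x=b_1,y=b_2,z=b_3\in P_+$: each pairwise bracket lands in $P_+\oplus\ins(P)\oplus P_+$ but the only surviving contribution to $J$ sits in $P_+$ and equals $\sum_{\rm cyc}\nu(?,b_i)b_j$-type terms which must vanish; here one needs the symmetry $\{a,b,c\}=\{c,b,a\}$ — but note that $[b_1,b_2]=0$ since $[P_+,P_+]\subseteq L_2=0$, so this case is in fact trivial, and likewise $(1,1,1)$ for $P_-$. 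That leaves $(0,1,-1)$: take $x=X\in\ins(P)$, $y=b\in P_+$, $z=a\in P_-$ — expanding $J(X,b,a)$ the $P_{\pm}$-components must vanish, and this comes down to the compatibility $[X,\nu(a,b)]=\nu(Xa,b)+\nu(a,Xb)$, which is precisely \eqref{nubracket} when $X=\nu(c,d)$ and extends by linearity to all of $\ins(P)$. The remaining case $(1,1,-1)$, with $x=b_1,y=b_2\in P_+$ and $z=a\in P_-$, is the crux: $[b_1,b_2]=0$, so $J$ collapses to $[[b_2,a],b_1]+[[a,b_1],b_2]=[-\nu(a,b_2),b_1]-[-\nu(a,b_1),b_2]$, and evaluating the $\ins(P)$-action this lands in $P_+$ as $-V_{b_2,a}b_1+V_{b_1,a}b_2=-\{b_2,a,b_1\}+\{b_1,a,b_2\}=0$ by the symmetry identity in \eqref{jp}. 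So I expect the hardest step to be organizing the $(0,1,-1)$ and $(1,1,-1)$ expansions cleanly and making sure every $\nu$-term is accounted for with the correct sign; conceptually nothing beyond the two defining identities \eqref{jp} — equivalently \eqref{nubracket} plus the symmetry of the triple product — is needed. Finally I would remark that this construction is functorial in the obvious way, though that is not asserted in the statement.
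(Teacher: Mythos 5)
Your direct verification is correct in substance, but note that the paper itself gives no proof of this statement: it is quoted as the classical Tits--Kantor--Koecher theorem, and the only place the paper carries out a Jacobi-identity check of this kind is in the proof of Theorem~\ref{univTKK}, where the grading and the alternating property of the Jacobiator are used exactly as you use them, except that there the nonzero-degree cases are disposed of by mapping onto $\TKK(P)$ via the central extension $\upsilon$ (i.e.\ by invoking the present theorem), and only the degree-$(0,-1,1)$ case is computed. So your argument is the standard self-contained proof that the paper omits, and it buys independence from any citation: the grading reduces everything to finitely many degree patterns, the cases $(0,0,0)$, $(0,0,\pm1)$, $(0,\pm1,\pm1)$, $(\pm1,\pm1,\pm1)$ are trivial or follow from $\ins(P)$ being an operator algebra acting on $P_\pm$, the case $(0,1,-1)$ is exactly the linear extension of~(\ref{nubracket}) to $[X,\nu(a,b)]=\nu(Xa,b)+\nu(a,Xb)$, and $(\pm1,\pm1,\mp1)$ is the symmetry $\{a,b,c\}=\{c,b,a\}$ from~(\ref{jp}); nothing else is needed, as you say. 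Three small corrections to your bookkeeping: in the $(0,1,-1)$ case the Jacobiator has total degree $0$, so it is the $\ins(P)$-component (not the $P_\pm$-components) that must vanish; in the $(0,0,\pm1)$ case no $\nu$-terms arise at all, only the module action, so~(\ref{nubracket}) is not used there; and in the $(1,1,-1)$ computation remember that $\nu(a,b)$ acts on $P_+$ as $-V_{b,a}$, so the two surviving terms are $V_{b_2,a}b_1-V_{b_1,a}b_2$ (your signs are uniformly flipped), which of course still cancels by the symmetry identity. None of these affects the validity of the argument.
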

This Lie algebra is denoted by $\TKK(P)$ and is called the {\it
Tits-Kantor-Koecher construction}, or {\it TKK} for short.  It follows
readily from~(\ref{tkk}) that the algebra $\TKK(P)$ is 3-graded and
$\forJP(\TKK(P))=P$ for the forgetful functor $\forJP$ defined in
Section~\ref{3gr}.

It was noted by Kantor and Koecher that the TKK construction has certain
functorial properties. Specifically, if one allows only epimorphisms as
morphisms in the category of Jordan pairs under consideration, then TKK
constitutes a functor from this category to $\Lthr$.  In general
however, to be able to extend a Jordan pair homomorphism $\gamma:P\to Q$
to a Lie algebra homomorphism $\widehat{\gamma}:\TKK(P)\to \TKK(Q)$, one
needs the condition: $\sum _i V_{a_i,b_i}=0 \mbox{ implies }\sum _i
V_{\gamma(a_i),\gamma(b_i)}=0$.

\subsection{TKK construction for Jordan triple systems and algebras}
\label{TKKts}
 Here our goal is to show that whenever the pair $P$ is obtained from a
 Jordan triple system or a Jordan algebra, the Lie algebra $\TKK(P)$ can
 be considered naturally as an object of $\Lthrinv$ or of $\LAone$,
 respectively.
Assume that $P=\prJTS T$ for a Jordan triple system $T$. Then it is easy
to see that the endomorphism $\bar\kappa$ on $\TKK(P)$ defined by
\beq\label{canI}
\bar\kappa(a_-+\nu(c,d)+b_+)=b_--\nu(d,c)+a_+
\eeq
is an anti-graded involution. The restriction of $\bar\kappa$ onto
$(T,T)$ is the canonical involution $\kappa$ of the pair $\prJTS T$
defined in Sect.~\ref{Jts}, so we can conclude that $\forJTS
\TKK(P)=T$.

Assume now that $P=\prJA J$ for a Jordan algebra $J$ with the identity
element~1.  Then the triple $\bar\frs=\langle -\nu(1,2), 2_+, 1_-\rangle$
 is an $\sltwo$-triple of $\TKK(P)$, the 3-grading defined by this
 triple coincides with the standard grading, and $\forJA \TKK(P)=J$.


\section{Universal Imbeddings of Jordan objects}\label{univ}
In the previous section we saw that any Jordan object can be imbedded
into a Lie algebra of a certain type. It is not difficult to see that
there exists a universal imbedding, even in much more general settings
(see~\cite[Sect.1.4]{BZ}, for example). In the next section we present a
different construction of a universal object. The construction is obtained as a result of a general
procedure that first appeared in~\cite{BS} and was modified in~\cite{S}. We
believe simplicity of the generating set to be another merit of our
construction.

\subsection{Construction of universal imbedding} Let $P=\prP$ be a
Jordan pair and $\ins(P)$ be its structure Lie algebra described in
Sect.~\ref{inner}. Since $\ins(P)$ is a subalgebra of
$\mathfrak{gl}(P_-)\oplus \mathfrak{gl}(P_+)$, it acts canonically on
$P_-$ and $P_+$, and on the tensor product $P_-\otimes P_+$ via $X\big(\sum_ia_i\otimes b_i\big)=\sum_iXa_i \otimes
b_i+\sum_ia_i\otimes  Xb_i$. Furthermore, it follows
from~(\ref{nubracket}) that the map $\lambda : P_-\otimes  P_+ \to
\ins(P)$ defined by $\lambda (a \otimes b) = \nu (a,b)$ is a module
homomorphism from $P_-\otimes P_+$ to the regular module $\ins(P)$.
Therefore we can apply the following

\begin{lm} \label{mod} {\rm\bf(\cite[Lemma 3.1]{S})}
Let $M$ be a module over a Lie algebra $L$ and let $\lambda:M\to L$ be
an $L$-module epimorphism. Then $A(M)=\span\{\lambda(m)\cdot m:m\in M\}$ is a
submodule of $M$, the quotient module $Q=M/A(M)$ with the product $[p,q]=\mu(p)\cdot q$ is a Lie
algebra, and the map $M/A(M)\to L$ induced by $\lambda$ is a central extension.
\end{lm}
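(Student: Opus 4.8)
The plan is to verify the three assertions of the lemma in sequence, relying on the module-homomorphism identities that $\lambda$ satisfies. Write $M$ with $L$-action $x\cdot m$ and set $[m,m']:=\lambda(m)\cdot m'$ as the candidate bracket on $M$ (it will descend to $Q$ afterwards). The essential input is that $\lambda$ is $L$-equivariant, i.e. $\lambda(x\cdot m)=[x,\lambda(m)]$ for $x\in L$, $m\in M$, together with the module axiom $[x,y]\cdot m = x\cdot(y\cdot m)-y\cdot(x\cdot m)$.

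First I would record the two consequences of equivariance that do all the work. Applying $\lambda$ to the bracket gives $\lambda([m,m'])=\lambda(\lambda(m)\cdot m')=[\lambda(m),\lambda(m')]$; in particular $\lambda$ intertwines the new product on $M$ with the Lie bracket on $L$. Next, using the module axiom with $x=\lambda(m)$, $y=\lambda(m')$ one gets the ``pre-Jacobi'' relation
\beq
[\lambda(m),\lambda(m')]\cdot m'' = \lambda(m)\cdot(\lambda(m')\cdot m'') - \lambda(m')\cdot(\lambda(m)\cdot m''),
\eeq
and rewriting $[\lambda(m),\lambda(m')]=\lambda([m,m'])$ turns the left side into $[[m,m'],m'']$, so that
\beq
[[m,m'],m''] = [m,[m',m'']] - [m',[m,m'']].
\eeq
This is exactly the Jacobi identity for the bracket on $M$; it holds on the nose, before passing to any quotient. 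So the only thing preventing $(M,[\ ,\ ])$ from being a Lie algebra is anti-commutativity, which fails in general — $[m,m']+[m',m]$ need not vanish on $M$ itself.

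Second, I would show $A(M)=\span\{\lambda(m)\cdot m\}=\span\{[m,m]\}$ is an $L$-submodule and, more, an ideal for the bracket. For the $L$-submodule claim: $x\cdot(\lambda(m)\cdot m) = (x\cdot(\lambda(m)\cdot m) - \lambda(m)\cdot(x\cdot m)) + \lambda(m)\cdot(x\cdot m) = [x,\lambda(m)]\cdot m + \lambda(m)\cdot(x\cdot m) = \lambda(x\cdot m)\cdot m + \lambda(m)\cdot(x\cdot m)$, and since the ground ring has no $2$-torsion one can use the polarization $\lambda(m)\cdot n + \lambda(n)\cdot m = [m+n,m+n]-[m,m]-[n,n] \in A(M)$ to see this lands in $A(M)$ (with $n=x\cdot m$). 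Hence $A(M)$ is $L$-stable, and since the bracket is $\lambda(p)\cdot(-)$, it is also stable under bracketing on the right; anti-commutativity modulo $A(M)$ is immediate because $[m,m']+[m',m]$ is again a polarization expression lying in $A(M)$. Therefore $Q=M/A(M)$ inherits a well-defined anti-commutative bracket satisfying Jacobi, i.e. $Q$ is a Lie algebra, and the induced map $\bar\lambda:Q\to L$ is a Lie homomorphism by the first identity above.

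Third, for the central extension claim: $\bar\lambda$ is surjective because $\lambda$ is (this is where the epimorphism hypothesis is used), and I must check its kernel is central in $Q$. If $\bar\lambda(\bar m)=0$, i.e. $\lambda(m)=0$ in $L$, then for any $m'$ we have $[m,m']=\lambda(m)\cdot m'=0$ in $M$ already, so $\bar m$ is central in $Q$; this gives a central extension $0\to \Kr\bar\lambda \to Q \to L \to 0$. The main obstacle — really the only point requiring care rather than bookkeeping — is the submodule/ideal verification for $A(M)$: one has to polarize correctly and genuinely invoke the absence of $2$-torsion, and one must be careful that the bracket on $Q$ is well-defined in \emph{both} slots (right slot is automatic since $A(M)$ is an $L$-submodule and acts through $\lambda$; left slot because $\lambda(A(M))=0$, as $\lambda([m,m])=[\lambda(m),\lambda(m)]=0$). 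Everything else is a direct consequence of the two equivariance identities extracted at the start.
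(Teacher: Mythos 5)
Your proof is correct and complete. Note that the paper itself does not prove this lemma --- it quotes it from \cite[Lemma 3.1]{S} --- so there is no internal argument to compare against; what you give is the standard one: the bracket $[m,m']=\lambda(m)\cdot m'$ makes $M$ a (left) Leibniz algebra via equivariance of $\lambda$ and the module axiom, $A(M)$ is the span of the squares, it is killed by $\lambda$ and stable under the $L$-action, and the quotient is anticommutative by polarization, hence a Lie algebra, with central kernel since $\lambda(m)=0$ forces $[m,m']=0$. One small remark: the no-$2$-torsion hypothesis is not actually needed where you invoke it --- the identity $\lambda(m)\cdot n+\lambda(n)\cdot m=[m+n,m+n]-[m,m]-[n,n]$ already places the symmetrized element in $A(M)$ over any ring, because $A(M)$ is defined as a span; torsion hypotheses would only matter for a converse inclusion, which the lemma does not require.
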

 In our case the set $A(P_{-} \otimes P_{+})$  is spanned by elements of the form $\nu(a,b)(a\otimes b)$
or equivalently by
\beq \nu(a,b)(c\otimes d)+\nu(c,d)(a\otimes b) \label{t4}
\eeq
where $a,c\in P_{-}$ and $b,d\in P_{+}$. We let $\langle P_{-}, P_{+}
\rangle$ denote the quotient module $(P_{-} \otimes P_{+})/A(P_{-}
\otimes P_{+})$ and let $\langle a,b\rangle$ denote the coset containing
$a\otimes b$. Lemma~\ref{mod} yields:

\begin{cor} \label{centralinstr}
$\langle P_{-},P_{+}\rangle$ is a Lie algebra relative to the product
$[\langle a,b\rangle,\langle c,d\rangle]= \langle \{a,b,c\},d\rangle-
\langle c,\{b,a,d\}\rangle.$
Moreover, the map $\mu:\langle P_{-},P_{+}\rangle\to \ins(P)$, defined
by $\mu(\langle a,b\rangle)=\nu(a,b)$, is a central extension.
\end{cor}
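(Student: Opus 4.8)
The plan is to apply Lemma~\ref{mod} directly with $M = P_{-}\otimes P_{+}$, $L = \ins(P)$, and $\lambda$ the module homomorphism $a\otimes b \mapsto \nu(a,b)$, so the bulk of the work is (a) checking the hypotheses of the lemma and (b) translating the resulting abstract product on $M/A(M)$ into the explicit formula claimed in the corollary. For (a), the only hypothesis not already recorded in the excerpt is that $\lambda$ is an \emph{epimorphism}; but this is immediate, since by definition $\ins(P)$ is the span of the elements $\nu(a,b) = \lambda(a\otimes b)$, so $\lambda$ is onto. That $P_{-}\otimes P_{+}$ is an $\ins(P)$-module via the stated diagonal action, and that $\lambda$ is a module map, were both observed just before the statement of Lemma~\ref{mod} (the latter is a restatement of~(\ref{nubracket})), so these may be quoted.

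With the hypotheses in place, Lemma~\ref{mod} instantly yields three things: $A(P_{-}\otimes P_{+})$ is a submodule, the quotient $\langle P_{-},P_{+}\rangle$ carries a Lie algebra structure with product $[p,q] = \mu(p)\cdot q$ (where $\mu$ is the map induced by $\lambda$, i.e. $\mu(\langle a,b\rangle) = \nu(a,b)$), and this $\mu$ is a central extension onto $\ins(P)$. That already gives the ``Moreover'' sentence verbatim. What remains is purely computational: unwinding $[\langle a,b\rangle,\langle c,d\rangle] = \mu(\langle a,b\rangle)\cdot(c\otimes d) = \nu(a,b)\cdot(c\otimes d)$ using the diagonal action and the definition $\nu(a,b) = (V_{a,b},-V_{b,a})$. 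One gets
\beq
\nu(a,b)\cdot(c\otimes d) = (V_{a,b}c)\otimes d + c\otimes(-V_{b,a}d) = \{a,b,c\}\otimes d - c\otimes\{b,a,d\},
\eeq
which projects to $\langle\{a,b,c\},d\rangle - \langle c,\{b,a,d\}\rangle$, exactly the asserted formula.

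I do not anticipate a genuine obstacle here; the corollary is essentially a specialization of Lemma~\ref{mod}, and the one small thing to be careful about is bookkeeping with the two products $\{\ ,\ ,\ \}_{-}$ and $\{\ ,\ ,\ \}_{+}$ on $P$ — in the expression $\{a,b,c\}$ with $a,c\in P_{-}$ one uses $V_{a,b} = \{a,b,-\}_{-}$, while $\{b,a,d\}$ with $b,d\in P_{+}$ uses $V_{b,a} = \{b,a,-\}_{+}$, and the sign in $\nu(a,b) = (V_{a,b},-V_{b,a})$ is what produces the minus sign between the two terms of the bracket. It is also worth remarking (though the lemma already guarantees well-definedness) that the generators $\langle a,b\rangle$ span $\langle P_{-},P_{+}\rangle$ by construction and that the displayed bracket is consistent with the relations~(\ref{t4}) defining $A(P_{-}\otimes P_{+})$ — but since all of this is subsumed by Lemma~\ref{mod}, no independent verification is needed.
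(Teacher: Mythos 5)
Your proposal is correct and follows exactly the paper's route: the paper derives Corollary~\ref{centralinstr} directly from Lemma~\ref{mod} applied to $M=P_{-}\otimes P_{+}$, $L=\ins(P)$, and $\lambda(a\otimes b)=\nu(a,b)$, with no further argument given. Your explicit verification of the surjectivity of $\lambda$ and the unwinding of $\nu(a,b)\cdot(c\otimes d)=\{a,b,c\}\otimes d-c\otimes\{b,a,d\}$ is exactly the (routine) content the paper leaves implicit.
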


In what follows, the $\ins(P)$-modules $P_-$ and $P_+$ will be viewed as
$\langle P_-, P_+ \rangle$-modules via $\mu$.

\begin{thm} \label{univTKK} For any Jordan pair $P=\prP$, the $k$-module
$$\uTKK(P)=P_{-}\oplus \langle P_{-},P_{+}\rangle\oplus P_{+}$$ together
with the bracket
\beq\label{utkk}
[a+X+b,c+Y+d]
=(Xc-Ya)+([X,Y]+\langle a,d\rangle-\langle c,b\rangle)+(Xd-Yb)
\eeq
for $X,Y\in \langle P_{-},P_{+}\rangle$, $a, c \in P_-$, and $b, d \in
P_+$, is a 3-graded Lie algebra and the map
$\upsilon:\uTKK(P)\to \TKK(P)$ defined by
\beq\label{ceTTK}
\upsilon(a+X+b)=a+\mu(X)+b
\eeq
is a graded central extension.

Moreover, in the special case when $P=\prJTS T$ for a Jordan triple
system $T$, the map
\beq\label{ucanI}
\widehat{\kappa}(a_-+\langle c,d\rangle+b_+)=b_--\langle d,c\rangle+a_+
\eeq
is an anti-graded involution of $\uTKK(P)$ and $\upsilon$ is an
involutary morphism from $(\uTKK(P),\widehat\kappa)$ to
$(\TKK(P),\bar\kappa)$.

In the special case when $P=\prJA J$ for a unital Jordan algebra $J$,
the triple $\widehat\frs=\langle -\langle 1,2\rangle, 2_+, 1_-\rangle$
is an $\sltwo$-triple of $\uTKK(P)$, the 3-grading defined by this
triple coincides with the canonical grading, and $\upsilon$ is an
A$_1$-graded morphism from $(\uTKK(P),\widehat\frs)$ to
$(\TKK(P),\bar\frs)$.
\end{thm}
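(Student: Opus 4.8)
The plan is to derive everything from Lemma~\ref{mod} and Corollary~\ref{centralinstr}, exploiting the fact that $\uTKK(P)$ is built out of the same ingredients as $\TKK(P)$ with $\ins(P)$ replaced by its central extension $\langle P_-,P_+\rangle$. First I would verify that $\uTKK(P)$ is a Lie algebra: the bracket~(\ref{utkk}) is manifestly skew-symmetric, so the only issue is the Jacobi identity. Rather than expanding the triple bracket by brute force, I would argue that~(\ref{utkk}) is obtained from~(\ref{tkk}) by the standard ``lift along a central extension'' mechanism — the new bracket is well-defined because $\langle a,b\rangle$ depends $k$-linearly and well-definedly on $a\otimes b$ modulo $A(P_-\otimes P_+)$, and the Jacobi identity for the degree-$0$ part is exactly the Lie algebra axiom for $\langle P_-,P_+\rangle$ proved in Corollary~\ref{centralinstr}, while the mixed components reduce to the module action identities already built into the $\langle P_-,P_+\rangle$-module structure on $P_\pm$ (via $\mu$) together with identity~(\ref{nubracket}) lifted. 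The cleanest route is probably to check Jacobi only on homogeneous triples of elements, using the grading to split into a small number of cases; the case with all three elements in $\langle P_-,P_+\rangle$ is Corollary~\ref{centralinstr}, and the remaining cases are short because at most one $\langle\,,\,\rangle$-term appears.

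Next I would establish that $\upsilon$ is a graded central extension. That $\upsilon$ is a graded Lie algebra homomorphism is immediate by comparing~(\ref{utkk}) with~(\ref{tkk}) componentwise and using $\mu(\langle a,b\rangle)=\nu(a,b)$ together with the fact that $\mu$ is a Lie homomorphism (Corollary~\ref{centralinstr}). Surjectivity of $\upsilon$ follows since $\mu$ is surjective and $\upsilon$ is the identity on the $\pm1$ components. For the kernel: $\ker\upsilon = \ker\mu \subseteq \langle P_-,P_+\rangle$ sits in degree $0$, and $\ker\mu$ is central in $\langle P_-,P_+\rangle$ because $\mu$ is a central extension (Corollary~\ref{centralinstr}); I must also check that $\ker\mu$ acts trivially on $P_-$ and $P_+$, which is true by construction since $P_\pm$ are viewed as $\langle P_-,P_+\rangle$-modules precisely through $\mu$. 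Hence $\ker\upsilon$ is central in all of $\uTKK(P)$, as required.

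For the Jordan triple system case, I would check directly that $\widehat\kappa$ in~(\ref{ucanI}) is a Lie algebra homomorphism: skew-symmetry of the products is preserved, and the only nontrivial verification is that $\langle a,b\rangle \mapsto -\langle b,a\rangle$ is compatible with the bracket on $\langle P_-,P_+\rangle$ — this is exactly the statement, already noted in Section~\ref{inner}, that $\nu(x,y)\mapsto-\nu(y,x)$ is a period-two automorphism of $\ins(T)$, lifted along $\mu$ (and it lifts because $A(P_-\otimes P_+)$ is stable under the swap). Clearly $\widehat\kappa^2=\id$ and $\widehat\kappa(L_i)\subseteq L_{-i}$, so $\widehat\kappa$ is an anti-graded involution; that $\upsilon\widehat\kappa = \bar\kappa\upsilon$ follows by comparing~(\ref{ucanI}) with~(\ref{canI}) and using $\mu(\langle d,c\rangle)=\nu(d,c)$. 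For the Jordan algebra case, I would compute the three brackets among $-\langle 1,2\rangle$, $2_+$, and $1_-$ using~(\ref{utkk}) and the Jordan-triple operation $\{a,b,c\}=(a\Jmult b)\Jmult c + a\Jmult(b\Jmult c) - b\Jmult(c\Jmult a)$ with $1\Jmult x = x$, to confirm the $\sltwo$-relations; this is the Jordan-algebra identity computation that $\langle -\langle 1,2\rangle,\, 2_+,\, 1_-\rangle$ is an $\sltwo$-triple, which mirrors the already-asserted fact that $\bar\frs$ is an $\sltwo$-triple of $\TKK(P)$. The eigenspace decomposition of $\ad(-\langle 1,2\rangle)$ then coincides with the canonical $3$-grading because $-\langle 1,2\rangle$ maps to $-\nu(1,2)$ under $\mu$ and $\upsilon$ is the identity on $P_\pm$, and $\upsilon$ sends $\widehat\frs$ to $\bar\frs$ elementwise, so $\upsilon$ is an $A_1$-graded morphism.

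The main obstacle I anticipate is none of the individual verifications but rather organizing the Jacobi identity check efficiently: done naively it is a long case analysis, and the trick is to reduce it entirely to Corollary~\ref{centralinstr} plus the module identities, so that only a couple of genuinely new small computations remain. A secondary subtlety is making sure every auxiliary map (the swap for $\widehat\kappa$, the $\sltwo$-triple relations) is well-defined on the quotient $\langle P_-,P_+\rangle$, i.e.\ respects $A(P_-\otimes P_+)$ — but in each case this follows from the corresponding statement at the level of $\ins(P)$.
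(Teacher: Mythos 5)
Your proposal is correct, but it organizes the key verification differently from the paper. For the Jacobi identity the paper uses a shortcut: it first observes that $\upsilon$ is a graded epimorphism onto the Lie algebra $\TKK(P)$ with $\Kr(\upsilon)\subseteq\uTKK(P)_0$, so the Jacobian of any homogeneous triple whose degrees do not sum to zero maps to $0$ in $\TKK(P)$ and lies outside degree $0$, hence vanishes automatically; this leaves essentially the single computation ${\mathfrak j}(X,a,b)=\langle Xa,b\rangle-[X,\langle a,b\rangle]+\langle a,Xb\rangle=0$ for $(|l|,|k|,|m|)=(0,-1,1)$ (the all-degree-$0$ case being Corollary~\ref{centralinstr}). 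You instead propose the full case analysis over homogeneous triples, which is perfectly valid and is essentially the classical TKK verification with $\ins(P)$ replaced by its central cover: the $(0,0,0)$ case is Corollary~\ref{centralinstr}, the $(0,0,\pm1)$ cases are the module-via-$\mu$ identities, and $(0,-1,1)$ is the computation above. One small imprecision in your attribution: the case with degrees $(\pm1,\pm1,\mp1)$ is not covered by ``module action identities'' or by a lift of~(\ref{nubracket}); it reduces to $V_{d,a}b=V_{b,a}d$, i.e.\ the outer symmetry $\{a,b,c\}=\{c,b,a\}$ from the Jordan pair axioms~(\ref{jp}) --- the same identity used in the unmodified TKK theorem. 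This is still a one-line check, so it is not a gap, but it should be named. Everything else in your plan matches the paper: centrality of $\Kr(\upsilon)=\Kr(\mu)$ via Lemma~\ref{mod} and the fact that the action of $\langle P_-,P_+\rangle$ on $P_\pm$ factors through $\mu$; well-definedness of $\widehat\kappa$ from stability of $A(P_-\otimes P_+)$ under the signed swap (you are in fact more explicit here than the paper, which simply asserts it follows from~(\ref{utkk})); and the direct $\sltwo$-relation check in the Jordan algebra case. The trade-off is that the paper's degree argument buys brevity, while your route is more elementary and does not presuppose that $\TKK(P)$ has already been verified to be a Lie algebra beyond what Theorem 4.1 provides.
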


\begin{proof} The fact that $\uTKK(P)=P_{-}\oplus \langle
P_-,P_+\rangle\oplus P_{+}$ is a 3-grading follows directly
from~(\ref{utkk}). We write $|l|=i$ if $l\in \uTKK(P)_i$.

It is easy to see from~(\ref{utkk}) that the bracket $[\,,\,]$ is
anticommutative, and from~(\ref{tkk}) and~(\ref{utkk}) that $\upsilon$
is an epimorphism with $\Kr(\upsilon)\subseteq \uTKK(P)_0=\langle
P_{-},P_{+}\rangle$. It follows that the Jacobian ${\mathfrak
j}(l,k,m)$, defined by
${\mathfrak j}(l,k,m)=[[l,k],m]+[[k,m],l]+[[m,l],k]$, is zero for $l, k,
m \in \uTKK(P)$ with $|l|+|k|+|m|\ne 0$.
Assume that $|l|+|k|+|m|=0$. Because ${\mathfrak j}(l,k,m)$ is an
alternating function and because of the $+/-$ symmetry, it suffices to
check that ${\mathfrak j}(l,k,m)=0$ when $(|l|,|k|,|m|)=(0,-1,1)$. Here
is the calculation for this case:
$$
{\mathfrak j}\big(X,a,b\big)=\langle Xa,b\rangle-
[X, \langle a, b \rangle] +\langle a,Xb\rangle=0.
$$
Thus, $\uTKK(P)$ is a 3-graded Lie algebra and $\upsilon$ is a graded
epimorphism. It is easy to see that
$\Kr(\upsilon)=\Kr(\mu)\subseteq\Cen(\uTKK(P))_0$.

Assume now that $P=\prJTS T$ for a Jordan triple system $T$. It follows
from~(\ref{utkk}) that $\widehat\kappa$ is an anti-graded involution of
$\uTKK(P)$ and it is clear that $\upsilon: (\uTKK(P),\widehat\kappa)
\longrightarrow (\TKK(P),\bar\kappa)$ is a morphism in $\Lthrinv$.

Finally, when $P=\prJA J$ for a unital Jordan algebra $J$, it is easy to
check that $\widehat\frs=\langle \langle 2,1\rangle, 2_+, 1_-\rangle$ is
an $\sltwo$-triple of $\uTKK(P)$, the 3-grading defined by this triple
coincides with the standard grading, and $\upsilon:
(\uTKK(P),\widehat\frs) \longrightarrow (\TKK(P),\bar\frs)$ is a
morphism in $\LAone$.

\end{proof}

Our construction $\uTKK(P)$ has an especially simple form for the case
of Jordan algebras. To this end we need:

\begin{lm}
If $J$ is a Jordan algebra with the identity element $1$, then $\langle
J,J\rangle=J\otimes J/A(J\otimes J)$ where $A(J\otimes J)$ is spanned by
the elements of the form
\beq
a^2\otimes a-1\otimes a^3. \label{Aspan}
\eeq
\end{lm}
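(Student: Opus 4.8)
The assertion is the equality $A(J\otimes J)=B$, where $B\subseteq J\otimes J$ is the submodule spanned by the elements $a^{2}\otimes a-1\otimes a^{3}$; I would prove the two inclusions separately. Throughout I use that, by the description of $A(M)$ following Lemma~\ref{mod}, $A(J\otimes J)$ is spanned by the elements $\nu(a,b)(c\otimes d)+\nu(c,d)(a\otimes b)$, that $\nu(a,b)(c\otimes d)=\{a,b,c\}\otimes d-c\otimes\{b,a,d\}$ with $\{x,y,z\}=(xy)z+x(yz)-y(zx)$, and the special values $\{x,y,1\}=\{1,y,x\}=xy$, $\{1,y,1\}=y$, $\{1,1,x\}=x$. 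I would also record at the outset that $A(J\otimes J)$ is stable under the flip $\tau\colon x\otimes y\mapsto y\otimes x$, since $\tau$ sends the generator $\nu(a,b)(c\otimes d)+\nu(c,d)(a\otimes b)$ to the negative of $\nu(b,a)(d\otimes c)+\nu(d,c)(b\otimes a)$.

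For $B\subseteq A(J\otimes J)$, I would evaluate the generator at a few choices of arguments. Taking $(a,b,c,d)=(1,x,1,1)$ yields $D(x):=x\otimes1-1\otimes x\in A(J\otimes J)$ for every $x\in J$. Taking $(a,b,c,d)=(a,1,a^{2},1)$ yields $2a^{3}\otimes1-a^{2}\otimes a-a\otimes a^{2}$, so subtracting $2D(a^{3})$ places $a^{2}\otimes a+a\otimes a^{2}-2(1\otimes a^{3})$ in $A(J\otimes J)$. Taking $(a,b,c,d)=(a,a,a,1)$ yields $a^{3}\otimes1+a^{2}\otimes a-2a\otimes a^{2}$; applying $\tau$ to this element, subtracting, and removing a copy of $D(a^{3})$ places $a^{2}\otimes a-a\otimes a^{2}$ in $A(J\otimes J)$. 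Adding the last two displayed elements gives $2(a^{2}\otimes a-1\otimes a^{3})\in A(J\otimes J)$, hence $a^{2}\otimes a-1\otimes a^{3}\in A(J\otimes J)$.

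For the reverse inclusion I would first analyze $B$ by polarizing the cubic map $a\mapsto a^{2}\otimes a-1\otimes a^{3}$; this shows that $B$ contains $D(x)$, the elements $\sigma(x,y):=x\otimes y+y\otimes x-2(1\otimes xy)$, and the symmetric trilinear elements $\Theta(x,y,z):=(xy)\otimes z+(yz)\otimes x+(zx)\otimes y-1\otimes\big((xy)z+(yz)x+(zx)y\big)$, and since $\Theta(a,a,a)=3(a^{2}\otimes a-1\otimes a^{3})$ the submodule $B$ is in fact generated by the $\Theta$'s; in particular $B$ is $\tau$-stable, so the cyclic relations $\sum_{\mathrm{cyc}}\big((xy)\otimes z-z\otimes(xy)\big)\in B$ hold. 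Next, since the ``diagonal'' generators $\nu(a,b)(a\otimes b)$ span $A(J\otimes J)$ — polarize in $a$ and in $b$, using the first identity in~(\ref{jp}) to make the two cross terms coincide — it suffices to show $\nu(a,b)(a\otimes b)=\{a,b,a\}\otimes b-a\otimes\{b,a,b\}\in B$.

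Finally, working modulo $B$ and using $\sigma\in B$, write $x\otimes y\equiv 1\otimes xy+\tfrac12(x\otimes y-y\otimes x)$. This decomposes $\nu(a,b)(a\otimes b)$ into the symmetric term $1\otimes\big(\{a,b,a\}\,b-a\,\{b,a,b\}\big)$ and the antisymmetric term $\tfrac12\big((\{a,b,a\}\otimes b-b\otimes\{a,b,a\})-(a\otimes\{b,a,b\}-\{b,a,b\}\otimes a)\big)$. The symmetric term vanishes because $\{a,b,a\}\cdot b=a\cdot\{b,a,b\}$, which follows from the identity $\{\{a,b,a\},b,c\}=\{a,\{b,a,b\},c\}$ — a consequence of~(\ref{jp}) — evaluated at $c=1$. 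For the antisymmetric term, substitute $\{a,b,a\}=2(ab)a-a^{2}b$ and $\{b,a,b\}=2(ab)b-b^{2}a$ and simplify using the cyclic relations for $(x,y,z)$ equal to $(ab,b,a)$, $(a^{2},b,b)$, and $(b^{2},a,a)$; the expression collapses to $\tfrac12\big((b^{2}\otimes a^{2}-a^{2}\otimes b^{2})+(a^{2}\otimes b^{2}-b^{2}\otimes a^{2})\big)=0$. Hence $\nu(a,b)(a\otimes b)\in B$ and $A(J\otimes J)=B$. The routine polarizations and the steps that replace a generator by an equivalent one after multiplying by a small integer are harmless under the standing hypotheses on $k$; the one genuinely delicate point is this last computation — seeing that, modulo $B$, the diagonal generator splits into a symmetric piece controlled by a single identity from~(\ref{jp}) and an antisymmetric piece controlled entirely by the cyclic relations in $\Lambda^{2}J$ already built into $B$ — and it is the only place where the Jordan identity, as opposed to plain commutativity, enters, matching the fact that the statement fails for general commutative algebras.
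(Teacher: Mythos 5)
Your proof is correct, and its skeleton is the paper's: both inclusions are handled separately, $B\subseteq A$ by essentially the same specializations of the generators~(\ref{t4}) at the identity together with flip-stability of $A$, and $A\subseteq B$ by reducing to the diagonal generator $\{a,b,a\}\otimes b-a\otimes\{b,a,b\}$ and killing it modulo $B$. The second inclusion is organized genuinely differently, though. The paper works with the set $S^B$ of elements congruent modulo $B$ to their flip, derives the auxiliary congruences (\ref{m1}) and (\ref{m2}) from the linearized relation (\ref{master}), and finishes by applying $\eta$ and citing the linearized Jordan identity from \cite{ZSSS}; you instead split the diagonal generator, modulo $B$, into a symmetric piece plus a skew piece, kill the symmetric piece via $\{a,b,a\}b=a\{b,a,b\}$, which you get from the second identity of (\ref{jp}) specialized at $c=a$, $d=b$ and evaluated at $1$ (so no outside citation), and kill the skew piece with the cyclic relations in $B$ applied to $(ab,b,a)$, $(a^2,b,b)$, $(b^2,a,a)$ --- a computation I checked; using the latter two relations in tandem disposes of the $a^2\otimes b^2$ and $b^2\otimes a^2$ cross terms more transparently than the paper's intermediate step toward (\ref{m2}). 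You also make explicit, via the identity $\nu(a,b)(c\otimes d)+\nu(c,d)(a\otimes b)=\nu(a,d)(c\otimes b)+\nu(c,b)(a\otimes d)$ coming from outer symmetry, why it suffices to treat the diagonal generators, a point the paper leaves implicit. The divisions by $2$ and $3$ that you wave through are exactly the ones the paper itself performs, so on that score your argument is at the same level of rigor as the original.
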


\begin{proof} We need to prove that the set $A=A(J\otimes J)$ defined in
Section~5.1 equals to the span of~(\ref{Aspan}) denoted temporarily by
$B$. In the proof we write $x \equiv _T y$ if $x - y$ is an element of a
set $T$. We note that for the linear map $\omega: J \otimes J
\longrightarrow J \otimes J$,  defined by $\omega (a \otimes b) = b
\otimes a$, one has $(\nu_{a,b} (c \otimes d))^\omega =- \nu_{b,a} (d
\otimes c)$ for $a,b,c, d \in J$. Therefore we have $A ^\omega
\subseteq A$ and hence $x \equiv _A y$ implies $x^\omega \equiv _A
y^\omega$.
First we show that $B \subseteq A$.
Specializing to $b=c=d=1$ in~(\ref{t4}) and noting that $\nu_{1,1}$ acts
as zero on $J \otimes J$, we have
\begin{equation}
\ a \otimes 1 \equiv_A 1 \otimes a. \label{swapwitheinA}
\end{equation}
Next we specialize to $b=c=a$ and $d=1$ to obtain
\begin{equation}
a^3\otimes 1-a\otimes a^2\equiv _A a \otimes a^2-a^2 \otimes a.
\label{skewinA}
\end{equation}
Since the right-hand side is skew-symmetric relative to $\omega$,
(\ref{swapwitheinA}) and (\ref{skewinA}) imply that
\begin{equation*}
\begin{split}
2(a^3\otimes 1-a\otimes a^2)\equiv _A (a^3\otimes 1-a\otimes
a^2)-(a^3\otimes 1-a\otimes a^2)^\omega \equiv _A
a^3\otimes 1\\-a\otimes a^2-1\otimes a^3+a^2\otimes a
\equiv _A
 -(a \otimes a^2-a^2 \otimes a)
\equiv _A -(a^3\otimes 1-a\otimes a^2)
\end{split}
\end{equation*}
so $a^3\otimes 1-a\otimes a^2\equiv _A 0$ and $B\subseteq A$.

To show the reverse containment, we will use the linearized version
of~(\ref{Aspan}):  \begin{equation}
ab\otimes c +bc\otimes a + ac\otimes b\equiv _B 1\otimes (ab)c+1\otimes
(bc)a+1\otimes (ca)b. \label{master}
\end{equation}
Setting $b=c=1$ in this equation results in
\begin{equation}
a\otimes 1\equiv _B 1\otimes a, \label{b1}
\end{equation}
while setting $c=1$ and then applying~(\ref{b1}) gives us
\begin{equation}
a\otimes b +b\otimes a\equiv _B 2\otimes ab. \label{b2}
\end{equation}
It follows that for every element $X\in J\otimes J$
\begin{equation}
X\equiv_B X^\omega \mbox{ if and only if } X\equiv_B 1\otimes \eta(X),
\label{Beta}
\end{equation}
where $\eta:J\otimes J\to J$ is a linear map defined by $\eta(a\otimes
b)=ab$. Our next goal is to prove that the generators~(\ref{t4}) satisfy
one of the equivalent conditions from~(\ref{Beta}). Let us denote the
set of all such elements by $S^B$.

Specializing $c=ab$ in~(\ref{master}), we obtain $(ab)a\otimes
b+b(ab)\otimes a\equiv_{S^B}0$ and use~(\ref{b2}) to write it as
\begin{equation}
(ab)a\otimes b-a\otimes b(ab)\equiv_{S^B}0.\label{m1}
\end{equation}
Replacing $a$ with $a^2$ in~(\ref{master}) and setting $b=c$ yields
$2a^2b\otimes b\equiv_{S^B}0$. Adding this to $2b^2a\otimes
a\equiv_{S^B}0$ and using~(\ref{b2}) gives us
\begin{equation}
a^2b\otimes b-a\otimes b^2a\equiv_{S^B}0. \label{m2}
\end{equation}
Now (\ref{m1}) and (\ref{m2}) together with~(\ref{a2ts}) imply that a
generator~(\ref{t4}) of $A$ is
\begin{equation*}
\{a,b,a\}\otimes b-a\otimes \{b,a,b\}
=2(ab)a\otimes b - a^2b\otimes b -2a\otimes b(ab)+a\otimes
b^2a\equiv_{S^B}0\label{m3}
\end{equation*}
so it is in $S^B$. However the result of application of $\eta$ to this
element is a special case of the linearized Jordan identity
(see~\cite[Ident.(23), p.86]{ZSSS}):
$$
2((ab)a)b - (a^2b)b -2a(b(ab))+a(b^2a)=0.
$$
Consequently, (\ref{Beta}) implies that any generator (\ref{t4}) of $A$
is in $B$.
\end{proof}

\begin{cor} \label{univTKKforJA} For a unital Jordan algebra $J$, set
$J_+$ and $J_-$ to be two copies of $J$ and $\langle J,J\rangle=J\otimes
J/\span\{a^2\otimes a-1\otimes a^3\}$. Then the $k$-module
$$\uTKK(J)=J_{-}\oplus \langle J,J\rangle\oplus J_{+}$$ together with
the bracket ~(\ref{utkk}) is a 3-graded Lie algebra.
\end{cor}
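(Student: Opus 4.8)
The plan is to derive Corollary~\ref{univTKKforJA} as an immediate specialization of Theorem~\ref{univTKK} together with the preceding Lemma. First I would recall that, for a unital Jordan algebra $J$, the functor $\prJA$ produces the Jordan pair $P=\prJA J=(\dbldJ,1)$ whose underlying pair is $\dbldJ$ with triple product $\{a,b,c\}$ as in Sect.~\ref{Ja}. Applying Theorem~\ref{univTKK} to this pair $P$ already gives that $\uTKK(P)=P_{-}\oplus\langle P_{-},P_{+}\rangle\oplus P_{+}$, with the bracket~(\ref{utkk}), is a 3-graded Lie algebra. So the only thing to verify is that the module $\langle P_{-},P_{+}\rangle=\langle J,J\rangle$, which by definition is $(P_{-}\otimes P_{+})/A(P_{-}\otimes P_{+})=J\otimes J/A(J\otimes J)$, coincides with $J\otimes J/\span\{a^2\otimes a-1\otimes a^3\}$.

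That identification is precisely the content of the Lemma proved just above, which shows $A(J\otimes J)=\span\{a^2\otimes a-1\otimes a^3\}$. Hence I would simply cite that Lemma: writing $J_{\pm}$ for the two copies of $J$ (the roles of $P_{-}$ and $P_{+}$), we have $\langle J,J\rangle=J\otimes J/\span\{a^2\otimes a-1\otimes a^3\}$ on the nose, and the formula~(\ref{utkk}) for the bracket on $\uTKK(P)$ transcribes verbatim to a bracket on $J_{-}\oplus\langle J,J\rangle\oplus J_{+}$. Thus $\uTKK(J):=\uTKK(\prJA J)$ has exactly the stated form.

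The proof is therefore essentially a bookkeeping assembly: combine Theorem~\ref{univTKK} (which supplies the Lie algebra structure and 3-grading) with the Lemma (which supplies the simplified presentation of the degree-zero component). There is no real obstacle here, since both ingredients are already in place; the one point worth stating explicitly is that the bracket~(\ref{utkk}), written for a general pair in terms of $\nu$ and $\langle\cdot,\cdot\rangle$, becomes the concrete formula on $J_{-}\oplus\langle J,J\rangle\oplus J_{+}$ once one unwinds $\{a,b,c\}$ via~(\ref{a2ts}). Accordingly the write-up can be just a few sentences.

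\begin{proof}
By definition $\uTKK(J)=\uTKK(\prJA J)$ where $\prJA J=(\dbldJ,1)$ is the Jordan pair associated with $J$ as in Section~\ref{Ja}. Theorem~\ref{univTKK} asserts that $\uTKK(\prJA J)=J_{-}\oplus\langle J,J\rangle\oplus J_{+}$ with the bracket~(\ref{utkk}) is a 3-graded Lie algebra, where $\langle J,J\rangle=(J\otimes J)/A(J\otimes J)$. By the Lemma, $A(J\otimes J)=\span\{a^2\otimes a-1\otimes a^3\}$, so $\langle J,J\rangle=J\otimes J/\span\{a^2\otimes a-1\otimes a^3\}$, which is exactly the module in the statement. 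Hence $J_{-}\oplus\langle J,J\rangle\oplus J_{+}$ with the bracket~(\ref{utkk}) is a 3-graded Lie algebra.
\end{proof}
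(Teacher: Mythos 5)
Your proposal is correct and follows exactly the route the paper intends: the corollary is stated without proof precisely because it is the immediate combination of Theorem~\ref{univTKK} applied to $P=\prJA J$ (giving the 3-graded Lie algebra structure under the bracket~(\ref{utkk})) with the preceding Lemma identifying $A(J\otimes J)$ with $\span\{a^2\otimes a-1\otimes a^3\}$. Nothing is missing.
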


\begin{rem}{\rm It is also possible to select symmetric/skew-symmetric
tensors generating $A=A(J\otimes J)$. Let
$S^2(J)= \span \{a\otimes a: a \in J \}$ and $J\wedge J= \span
\{a\otimes b-b\otimes a: a,b \in J \}$ be the sets of symmetric and
skew-symmetric tensors and let $A_{symm}=A(J\otimes J)\cap S^2(J)$ and
$A_{skew}=A(J\otimes J)\cap (J\wedge J)$. In the proof we noted that
$A^\omega \subseteq A$, so $A= A_{symm}\oplus A_{skew}$.
Moreover one can prove that $A_{symm}= \span \{ 2 a\otimes a + 1\otimes a^2 +a^2\otimes 1: a \in J
\}$ and $A_{skew}= \span \{a^2\otimes a-a\otimes a^2: a \in J \}$. So one has the following decomposition of the algebra $\langle
J,J\rangle$ \begin{equation*}
\langle J,J\rangle=S^2(J)/A_{symm}\oplus (J\wedge J)/A_{skew}
\end{equation*}
which covers the one in~(\ref{DecIns2}).
}
\end{rem}

\subsection{Universal property of $\uTKK(P)$}
In the theorem below we consider $P$ as a subset of $\uTKK(P)$.

\begin{thm} \label{Tuniv}
Let $L=L_{-1}\oplus L_{0}\oplus L_{1}$ be a 3-graded Lie algebra.
\begin{itemize}
\item[{\rm (i)}] For every Jordan pair homomorphism $\gammaP :P\to
    \forJP(L)$, there is a unique graded Lie algebra homomorphism
    $\widehat\gamma:\uTKK(P)\to L$ extending $\gammaP$, that is
    $\widehat\gamma_{\pm 1}=\gammaP_{\pm}$.
It is defined by the formula
\beq\label{ext}
\widehat\gamma(a+\sum_i\langle c_i,d_i\rangle
+b)=\gammaP_{-}(a)+\sum_i[\gammaP_{-}(c_i),\gammaP_{+}(d_i)]+\gammaP_{+}(b)
\eeq
for $a, c_i \in P_-$ and $b, d_i \in P_+$.
\item[{\rm (ii)}] If $L$ has an anti-graded involution
    $\varepsilon$, then for every Jordan triple system homomorphism
    $\gammaT: T \to \forJTS(L,\varepsilon)$ there is a unique
    involutary Lie algebra homomorphism
    $\widehat\gamma:(\uTKK(\prJTS T),\widehat \ex) \to
    (L,\varepsilon)$ extending $\gammaT$, where $\widehat\ex$ is the
    canonical anti-graded involution on $\uTKK (\prJTS T)$.
\item[{\rm (iii)}] Furthermore, if $(L,\frs)$ is an $A_1$-graded Lie
    algebra, then for every unital Jordan algebra homomorphism
    $\gammaJA :J\to \forJA (L,\frs)$, there is a unique A$_1$-graded
    morphism $\widehat\gamma:(\uTKK(\prJA J),\widehat\frs)\to
    (L,\frs)$ extending $\gammaJA$.
\end{itemize}
\end{thm}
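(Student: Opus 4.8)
The plan is to prove (i) in full and then derive (ii) and (iii) by restricting the map produced in (i) and checking that it respects the extra structure. For (i), the essential content is already packaged in Corollary~\ref{centralinstr} and Theorem~\ref{univTKK}: $\uTKK(P)$ is generated as a Lie algebra by $P_-\oplus P_+$, since $\langle a,b\rangle=[a,b]$ holds in $\uTKK(P)$ for $a\in P_-$, $b\in P_+$. So \emph{uniqueness} of $\widehat\gamma$ is immediate once we fix $\widehat\gamma|_{P_\pm}=\gammaP_\pm$: any graded homomorphism extending $\gammaP$ must send $\sum_i\langle c_i,d_i\rangle=\sum_i[c_i,d_i]$ to $\sum_i[\gammaP_-(c_i),\gammaP_+(d_i)]$, which is exactly formula~(\ref{ext}). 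The work is to show that~(\ref{ext}) gives a \emph{well-defined} linear map on the degree-$0$ component $\langle P_-,P_+\rangle$ and that the resulting map is a Lie homomorphism.

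For well-definedness, recall that $\langle P_-,P_+\rangle=(P_-\otimes P_+)/A(P_-\otimes P_+)$, so it suffices to check that the bilinear map $(c,d)\mapsto[\gammaP_-(c),\gammaP_+(d)]\in L_0$ kills the spanning elements~(\ref{t4}) of $A(P_-\otimes P_+)$. Writing $\nu^L(x,y)$ for the structure operators of $\forJP(L)$ (so that $\nu^L(x,y)z=[[x,y],z]$ in $L$) and using that $\gammaP$ is a Jordan pair homomorphism, one has, for $a,c\in P_-$ and $b,d\in P_+$,
$$
\big[[\gammaP_-(a),\gammaP_+(b)],[\gammaP_-(c),\gammaP_+(d)]\big]
=[\nu^L(\gammaP a,\gammaP b)\gammaP c,\gammaP d]-[\gammaP c,\nu^L(\gammaP b,\gammaP a)\gammaP d]
$$
inside $L$, which is the image under the recipe of the bracket in Corollary~\ref{centralinstr}; evaluating at the specialization that defines~(\ref{t4}) and using~(\ref{nubracket}) for $\forJP(L)$ shows the generators of $A(P_-\otimes P_+)$ map to $0$. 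Hence $\widehat\gamma$ is well defined on all of $\uTKK(P)$. That $\widehat\gamma$ is a homomorphism is then a direct comparison of brackets: feeding two general elements $a+X+b$ and $c+Y+d$ into~(\ref{utkk}), applying $\widehat\gamma$, and matching against the bracket in $L$ computed from~(\ref{ext}); the three graded pieces correspond respectively to the $L_{\pm1}$-action of $L_0$ on $L_{\pm1}$, the bracket $[\,\cdot\,,\cdot\,]$ within $L_0$ together with the new $\langle a,d\rangle-\langle c,b\rangle$ terms (which map to $[\gammaP a,\gammaP d]-[\gammaP c,\gammaP b]$), and again the $L_1$-action. The identities making this match are precisely that $\gammaP$ intertwines $\nu^P$ with $\nu^L$, which is the Jordan-homomorphism condition.

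For (ii), apply (i) with $L$ (forgetting its involution) and $P=\prJTS T$, $\gammaP=(\gammaT,\gammaT):\prJTS T\to\forJP(L)$ — this is a Jordan pair homomorphism by the description of $\forJTS$ in Section~\ref{3grInv}. The resulting $\widehat\gamma:\uTKK(\prJTS T)\to L$ is the unique graded extension; it remains only to check $\widehat\gamma\circ\widehat\ex=\varepsilon\circ\widehat\gamma$. Both sides are graded-reversing linear maps, and since $\uTKK(\prJTS T)$ is generated in degrees $\pm1$, it suffices to verify the equality on $T_-$ and $T_+$, where it reduces to $\gammaT(b)=\varepsilon(\gammaT(a))$ being consistent with $\widehat\ex(a_-)=a_+$ — i.e. to the compatibility of $\gammaT$ with the canonical involution $\kappa$ and with $\varepsilon$, which holds because $\gammaT$ is a morphism into $\forJTS(L,\varepsilon)$. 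Uniqueness among \emph{involutary} homomorphisms follows a fortiori from uniqueness among graded ones. Part (iii) is identical in spirit: take $\gammaP=(\gammaJA,\gammaJA):\prJA J\to\forJP(L)$, get $\widehat\gamma$ from (i), and check it sends the $\sltwo$-triple $\widehat\frs$ of $\uTKK(\prJA J)$ to $\frs$ element-wise; since $\widehat\frs=\langle -\langle 1,2\rangle,2_+,1_-\rangle$ is built from $1\in J_\pm$, this reduces to $\gammaJA(1)$ mapping to the prescribed elements of $\frs$, which is exactly the statement that $\gammaJA$ is a morphism into $\forJA(L,\frs)$, and again uniqueness is inherited from (i).

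The main obstacle is the well-definedness check in (i): one must be careful that the spanning set~(\ref{t4}) for $A(P_-\otimes P_+)$ is handled correctly and that the computation uses only the two Jordan-pair identities~(\ref{jp}) as transported along $\gammaP$ — equivalently, that the bracket formula in Corollary~\ref{centralinstr} is functorial. Once that is in place, everything else is bracket-matching and a generation argument.
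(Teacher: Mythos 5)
Your part (i) is correct and follows essentially the same route as the paper: define $\widehat\gamma_0$ on $\langle P_-,P_+\rangle$ by checking that the spanning elements~(\ref{t4}) of $A(P_-\otimes P_+)$ lie in the kernel of $a\otimes b\mapsto[\gammaP_-(a),\gammaP_+(b)]$ (this is just the Jacobi identity in $L$ combined with the fact that $\gammaP$ carries $\{\,,\,,\,\}$ to $[[\cdot,\cdot],\cdot]$), and then use that $P_-\oplus P_+$ generates $\uTKK(P)$ both for the homomorphism property and for uniqueness.

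Parts (ii) and (iii), however, contain a genuine gap: the lift of $\gammaT$ (resp.\ $\gammaJA$) to a Jordan pair homomorphism is wrong. Since $\forJTS(L,\varepsilon)$ is the triple system $L_1$, the map $\gammaT$ takes values in $L_1$; hence $(\gammaT,\gammaT)$ is not a homomorphism of pairs $(T,T)\to\forJP(L)=(L_{-1},L_1)$ at all --- its minus component does not land in $L_{-1}$ --- and the resulting ``$\widehat\gamma$'' would not even be graded, so your verification of $\widehat\gamma\circ\widehat\ex=\varepsilon\circ\widehat\gamma$ on $T_\pm$ cannot be carried out. The correct lift is $(\varepsilon\gammaT,\gammaT)$, and one must check (via the isomorphism $(\forJP L,\widetilde\varepsilon)\simeq(\prJTS L_1,\ex)$ from Section~\ref{3grInv}) that this is a pair homomorphism; involutarity of $\widehat\gamma$ then holds on the generators precisely because the minus component is $\varepsilon\gammaT$. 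The same defect is more serious in (iii): an A$_1$-graded algebra $(L,\frs)$ comes with no involution, so before lifting $\gammaJA:J\to\forJA(L,\frs)=L_1$ to a pair homomorphism $(J,J)\to(L_{-1},L_1)$ one must first construct an anti-graded involution from the $\sltwo$-triple (the paper takes $\varepsilon=(-\tfrac12(\ad e)^2,-\tfrac12(\ad f)^2)$) and use $(\varepsilon\gammaJA,\gammaJA)$; your $(\gammaJA,\gammaJA)$ is again not a map into $\forJP(L)$. With the correct lift, the element-wise check that $\widehat\gamma$ sends $\widehat\frs$ to $\frs$ is a short computation starting from $\gammaJA(1)=\tfrac12 e$, not something inherited from uniqueness in (i) alone.
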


\begin{proof} For a pair homomorphism $\gammaP: P \longrightarrow \forJP
(L)$, we begin by constructing $\widehat\gamma_0:\langle
P_-,P_+\rangle\to L_0$.  It is easy to see that the set $A(P_{-}\otimes
P_{+})$ is in the kernel of the linear map $\varphi : P_{-}\otimes
P_{+}\to L_{0}$ defined by $\varphi(a\otimes
b)=[\gamma_{-}(a),\gamma_{+}(b)]$. Thus one can consider the map
$\widehat\gamma_0$ induced by $\varphi$ on $\langle
P_-,P_+\rangle=(P_{-}\otimes P_{+})/A(P_{-}\otimes P_{+})$.

We let $\widehat\gamma:=\gamma_-+\widehat\gamma_0+\gamma_+:P_{-}\oplus
\langle P_-,P_+\rangle\oplus P_{+}\to \Ldecomp$, which is clearly
graded.
It follows from the definition of $\widehat\gamma$ that
$\widehat\gamma[a,b]=[\gamma(a),\gamma(b)]=[\widehat\gamma(a),
\widehat\gamma(b)]$. Furthermore, for $a,c\in P_-$ and $b,d\in P_+$ one
has
$\widehat\gamma[\langle a,b\rangle,c]=\gamma\{a,b,c\}=
[[\gamma(a),\gamma(b)],\gamma(c)]=
[\widehat\gamma\langle a,b\rangle,\widehat\gamma(c)]$ and similarly
$\widehat\gamma[\langle a,b\rangle,d]=[\widehat\gamma\langle
a,b\rangle,\widehat\gamma(d)]$. Since the subspaces $P_-$ and $P_+$
generate the algebra $\uTKK(P)$, it follows that $\widehat\gamma$ is a
Lie algebra homomorphism and a unique extension of $\gammaP$.

To establish assertion (ii), we consider an anti-graded involution
$\varepsilon$ on $L$ and a triple system homomorphism $\gammaT: T \to
\forJTS(L,\varepsilon)$.  We lift $\gammaT$ to the Jordan pair
homomorphism $(\varepsilon \gammaT, \gammaT): \dbldT \to \forJP (L)$.
As previously, we define a linear map $\varphi: T \otimes T \to L_0$ by
$\varphi( a \otimes b):= [\varepsilon \gammaT(a), \gammaT (b)]$, which
gives us a well-defined map $\widehat \gamma _0$ on $\langle T, T
\rangle$ and a unique Lie algebra homomorphism $\widehat \gamma :=
\varepsilon \gammaT + \widehat \gamma _0 + \gammaT$.  Furthermore,
$\widehat \gamma: (\uTKK(\prJTS T),\widehat \kappa) \to (L,\varepsilon)$
is involutary.

We proceed similarly with assertion (iii). Assume that $(L,\frs)$ is an
object in $\LAone$ for an $\sltwo$-triple $\frs = \langle h, e, f
\rangle$ and that $\gammaJA :(J,1)\to \forJA (L,\frs)$ is a unital
Jordan algebra homomorphism.  We lift $\gammaJA$ to a Jordan pair
homomorphism $(\varepsilon \gammaJA, \gammaJA): \dbldJ \to \forJP (L)$
just as in the proof of assertion (ii), using the anti-graded involution
$\varepsilon = (\varepsilon _{-}, \varepsilon _{+})$ on $L$ given by $(-
\frac{1}{2} (\ad e)^2, - \frac{1}{2} (\ad f)^2)$.  Then we define the
unique Lie algebra homomorphism $\widehat \gamma$ as before.  It is easy
to see that this $\widehat \gamma$ takes the $\sltwo$-triple $\langle
\langle 2, 1 \rangle, 2_{+}, 1 _{-} \rangle$ in $\uTKK(\prJA J)$
element-wise to the triple $\langle h, e, f \rangle$ in $L$.
\end{proof}

\section{Categories of Lie algebras equivalent to $\JP$, $\JTS$, and
$\JA$}

\subsection{$\uTKK$ as left adjoint} Part (i) of Theorem~\ref{Tuniv}
implies that the identity map $P \to \forJP \uTKK(P)$ is a
universal arrow from $P$ to $\forJP$ (see~\cite{ML} for definitions and properties).
Hence,
the construction $\uTKK$ constitutes a left adjoint to the forgetful
functor $\forJP:\Lthr\to \JP$ from Section~\ref{3gr}.
Similarly, it follows from (ii) and (iii) of the theorem that functors
$\uTKK:\JTS\to\Lthrinv$ and $\uTKK:\JA\to\LAone$ are left adjoints to
the forgetful functors $\forJTS$ and $\forJA$ described in~Sections
\ref{3grInv} and~\ref{a1gr}, respectively. We do not introduce
additional notations for these functors since it will be clear from the
context which functor is considered.

In addition, formula~(\ref{ext}) implies that all three of these
adjoints are full and faithful. It follows that $\uTKK$ provides an
equivalence between the categories of Jordan objects $\JP$, $\JTS$,
$\JA$ on the one hand and full subcategories of $\Lthr$, $\Lthrinv$, and
$\LAone$, respectively, on the other hand. Our next  goal is to give
intrinsic descriptions of these subcategories.\\

\subsection{Images of $\uTKK$} \label{imageTKK}
The model for further development is the theory of central extensions of
perfect Lie algebras (e.g. see~\cite[Sect.\,7.9]{W}) and its $\bZ
_2$-graded version described in~\cite{S}. Here, we provide only a brief version of these standard arguments.

To describe Lie algebras $L$ isomorphic to $\uTKK(P)$, we note first
that definition~(\ref{utkk}) implies the equality $L_{0}=[L_{-1},L_1]$.
So for a 3-graded Lie algebra $L=L_{-1}\oplus L_{0}\oplus L_{1}$, we say
that $L$ is 0-{\it perfect} provided $L_0=[L_{-1},L_1]$. In particular
the graded algebras $\TKK(P)$ and $\uTKK(P)$ are 0-perfect.
Furthermore, we noted in Theorem~\ref{univTKK} that $\uTKK(P)$ is a
central extension of $\TKK(P)$. In fact this extension is universal in
the class of central $0$-extensions. Here we say that a graded
homomorphism $\varphi: K_{-1}\oplus K_{0}\oplus K_{1}\to L_{-1}\oplus
L_{0}\oplus L_{1}$ is a 0-{\it extension} of $L$ if $\varphi$ is
surjective and $\Kr(\varphi)\subseteq K_0$. A central $0$-extension is
said to be {\it universal} if every other central $0$-extension of $L$
factors uniquely through it. A graded algebra is said to be {\it
centrally $0$-closed} if every central $0$-extension of the algebra
splits in a unique way.

\begin{thm} \label{TuTKK} If a 3-graded Lie algebra $L$ is 0-perfect,
then $\upsilon:\uTKK(\forJP L)\to L$ is a universal central 0-extension
of $L$ for the map $\upsilon=\widehat{(\mbox{\rm id}_{\forJP L})}$ as in
Theorem~\ref{Tuniv}.
\end{thm}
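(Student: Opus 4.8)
The plan is to derive the statement essentially formally from the universal property in Theorem~\ref{Tuniv}(i), combined with the central-extension facts of Theorem~\ref{univTKK} and Corollary~\ref{centralinstr}. Throughout write $P=\forJP L=(L_{-1},L_1)$, and recall that $\upsilon=\widehat{(\id_P)}$ is the unique graded Lie homomorphism $\uTKK(P)\to L$ extending $\id_P$, given by~(\ref{ext}) as $a+\sum_i\langle c_i,d_i\rangle+b\mapsto a+\sum_i[c_i,d_i]+b$. What must be shown is that $\upsilon$ is a central $0$-extension and that for every central $0$-extension $\pi\colon K\to L$ there is a unique graded Lie homomorphism $f\colon\uTKK(P)\to K$ with $\pi\circ f=\upsilon$.

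First I would check that $\upsilon$ is itself a central $0$-extension. Surjectivity is immediate: $L_{\pm1}$ lie in the image and $L_0=[L_{-1},L_1]=\upsilon(\langle P_-,P_+\rangle)$ because $L$ is $0$-perfect; and since $\upsilon$ is the identity in degrees $\pm1$, its kernel lies in $\uTKK(P)_0=\langle P_-,P_+\rangle$, so $\upsilon$ is a $0$-extension. For centrality I would observe that, on $\langle P_-,P_+\rangle$, the map $\mu$ of Corollary~\ref{centralinstr} equals $X\mapsto\ad_L(\upsilon(X))|_{L_{-1}\oplus L_1}$, since $\nu(c,d)=(V_{c,d},-V_{d,c})$ acts on $L_{-1}\oplus L_1$ as $\ad_L([c,d])$ (indeed $V_{c,d}$ acts on $L_{-1}$ and $-V_{d,c}$ on $L_1$ each as $\ad_L([c,d])$). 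Hence $\Kr(\upsilon)\subseteq\Kr(\mu)$, and by Corollary~\ref{centralinstr} the latter is central in $\langle P_-,P_+\rangle$; since moreover $[X,a]=\mu(X)a=0$ for $X\in\Kr(\mu)$ and $a\in P_-$, and likewise on $P_+$, the space $\Kr(\mu)$ — and a fortiori $\Kr(\upsilon)$ — is central in all of $\uTKK(P)$.

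Next I would prove universality. Given any central $0$-extension $\pi\colon K\to L$ with $K=K_{-1}\oplus K_0\oplus K_1$, the restrictions $\pi_{\pm1}\colon K_{\pm1}\to L_{\pm1}$ are bijective (as $\pi$ is surjective, graded, with $\Kr(\pi)\subseteq K_0$), and a short check using that $\pi$ is a Lie homomorphism shows $\phi:=\big((\pi_{-1})^{-1},(\pi_1)^{-1}\big)\colon P\to\forJP K$ is a Jordan pair homomorphism. Theorem~\ref{Tuniv}(i) then yields a unique graded Lie homomorphism $\widehat\phi\colon\uTKK(P)\to K$ with $\widehat\phi_{\pm1}=\phi_\pm$. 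Since $\pi\circ\widehat\phi$ is a graded Lie homomorphism $\uTKK(P)\to L$ whose degree $\pm1$ parts are $\pi_{\pm1}\circ\phi_\pm=\id$, the uniqueness clause of Theorem~\ref{Tuniv}(i) applied to $\id_P$ forces $\pi\circ\widehat\phi=\upsilon$; thus $\widehat\phi$ is a morphism of $0$-extensions over $L$. Uniqueness of such a morphism follows again from Theorem~\ref{Tuniv}(i): if $\psi\colon\uTKK(P)\to K$ satisfies $\pi\circ\psi=\upsilon$ then $\pi_{\pm1}\circ\psi_{\pm1}=\id$, so $\psi_{\pm1}=\phi_\pm$ and hence $\psi=\widehat\phi$. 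Together with the previous step this shows $\upsilon$ is a universal central $0$-extension of $L$.

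No step here is a serious obstacle, since the real content is packaged inside Theorem~\ref{Tuniv}. The two places needing care are: (a) one must feed Theorem~\ref{Tuniv}(i) the \emph{inverse} isomorphism $\phi=(\pi|_{L_{\pm1}})^{-1}$ rather than $\pi$ itself, and then verify the triangle $\pi\circ\widehat\phi=\upsilon$, for which the uniqueness half of Theorem~\ref{Tuniv}(i) is the cleanest tool (a direct computation from~(\ref{ext}) also works); and (b) in checking centrality of $\upsilon$ one must show $\Kr(\upsilon)$ is central in the whole algebra $\uTKK(P)$, not merely in its degree-zero part, which is exactly what the identification $\mu=\ad_L\circ\upsilon$ on $L_{-1}\oplus L_1$ together with $\Kr(\mu)$ annihilating $P_\pm$ delivers. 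As a side remark, centrality of $\pi$ is never actually invoked in the universality step, so $\upsilon$ is in fact initial among all graded $0$-extensions of $L$.
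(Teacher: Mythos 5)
Your proposal is correct and follows essentially the same route as the paper: invert the degree-$\pm1$ restriction of a given central $0$-extension, extend the inverse to $\uTKK(\forJP L)\to K$ via Theorem~\ref{Tuniv}(i), and use that $L_{\pm1}$ generates $\uTKK(\forJP L)$ (equivalently $0$-perfectness) to get both the factorization $\pi\circ\widehat\phi=\upsilon$ and its uniqueness. The only difference is that you also spell out explicitly why $\upsilon$ itself is a central $0$-extension (via $\Kr(\upsilon)\subseteq\Kr(\mu)$ and Corollary~\ref{centralinstr}), a point the paper leaves implicit, which is a reasonable addition rather than a change of method.
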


\begin{proof}
If $\varphi: K\to L$ is a central 0-extension of $L$ then the Jordan
homomorphism $\forJP \varphi:\forJP K\to \forJP L$ is invertible and the
map $(\forJP \varphi)^{-1}:\forJP L \to\forJP K$ can be extended
uniquely to a Lie algebra homomorphism $\psi:\uTKK(\forJP L )\to K$ by
Theorem~\ref{Tuniv}.  Hence $(\forJP\varphi)\circ(\forJP\psi)=\mbox{\rm
id}_{\forJP L}=\forJP\upsilon$ implies $\varphi\circ\psi=\upsilon$ since
$\uTKK(\forJP L)$ is 0-perfect.
Since $L_{\pm 1}$ generates $\uTKK(\forJP L)$, $\psi$ is unique.
\end{proof}

\begin{cor}\label{0perf} A universal central $0$-extension of a
$0$-perfect Lie algebra is $0$-perfect.
\end{cor}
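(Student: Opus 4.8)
The plan is to deduce Corollary~\ref{0perf} directly from Theorem~\ref{TuTKK} together with the structural description of $\uTKK(P)$ recorded earlier. First I would observe that if $L=\Ldecomp$ is $0$-perfect, then by Theorem~\ref{TuTKK} the map $\upsilon:\uTKK(\forJP L)\to L$ is a universal central $0$-extension of $L$. Since universal objects are unique up to isomorphism, it suffices to show that $\uTKK(\forJP L)$ is itself $0$-perfect; then any algebra carrying a universal central $0$-extension structure over $L$ is isomorphic to $\uTKK(\forJP L)$ and hence $0$-perfect.

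The key step, then, is the remark made just before Theorem~\ref{TuTKK}: for any Jordan pair $P=\prP$, formula~(\ref{utkk}) shows that in $\uTKK(P)=P_{-}\oplus\langle P_{-},P_{+}\rangle\oplus P_{+}$ the degree-zero component $\langle P_{-},P_{+}\rangle$ is spanned by the brackets $\langle a,b\rangle=[a,b]$ with $a\in P_{-}$, $b\in P_{+}$, so that $\uTKK(P)_0=[\uTKK(P)_{-1},\uTKK(P)_1]$; that is, $\uTKK(P)$ is always $0$-perfect. Applying this with $P=\forJP L$ gives exactly what is needed.

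So the argument I would write is short: let $\pi:U\to L$ be any universal central $0$-extension of $L$. By Theorem~\ref{TuTKK}, $\upsilon:\uTKK(\forJP L)\to L$ is also a universal central $0$-extension, hence there is an isomorphism $U\cong\uTKK(\forJP L)$ (the uniqueness clause in the definition of universal central $0$-extension forces the comparison maps in both directions to compose to identities). Since $\uTKK(\forJP L)$ is $0$-perfect by the observation above, so is $U$.

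I do not anticipate a genuine obstacle here; the only point requiring a little care is the standard categorical bookkeeping that a universal central $0$-extension is unique up to (a unique) isomorphism over $L$ — this follows formally from the defining universal property exactly as in the ungraded case (cf.~\cite[Sect.\,7.9]{W}), and the paper has already set up the notion of ``centrally $0$-closed'' and the factorization property to make this routine.
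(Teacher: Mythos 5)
Your proposal is correct and follows the paper's own (implicit) route: Corollary~\ref{0perf} is meant as an immediate consequence of Theorem~\ref{TuTKK}, the observation preceding it that $\uTKK(P)$ is always $0$-perfect since $\uTKK(P)_0=\langle P_-,P_+\rangle=[\uTKK(P)_{-1},\uTKK(P)_1]$, and the standard uniqueness of universal central $0$-extensions up to a graded isomorphism over $L$. Your added remark that the uniqueness bookkeeping is formal is accurate (the comparison maps are graded, so $0$-perfectness transfers), so nothing is missing.
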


\begin{cor} \label{uTKKtoTKK} For every Jordan pair $P$ the algebra
$\uTKK(P)$ is a universal central 0-extension of $\TKK(P)$.
\end{cor}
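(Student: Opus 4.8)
The plan is to deduce this as an immediate consequence of Theorem~\ref{TuTKK} together with the two facts already established about $\uTKK$. First I would recall that $\TKK(P)$ is $0$-perfect: this was noted right after the statement of Theorem~\ref{TuTKK}, and follows from formula~(\ref{tkk}), since the middle component of any bracket $[a + 0 + 0,\ 0 + 0 + d]$ is $\nu(a,d)$, and the $\nu(a,d)$ span $\ins(P) = \TKK(P)_0$. Hence Theorem~\ref{TuTKK} applies with $L = \TKK(P)$, and we obtain that $\upsilon' := \widehat{(\mbox{\rm id}_{\forJP \TKK(P)})} : \uTKK(\forJP\TKK(P)) \to \TKK(P)$ is a universal central $0$-extension.

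The second ingredient is the identity $\forJP(\TKK(P)) = P$, which was observed in Section~\ref{TKKp} immediately after the TKK theorem. Substituting this into the previous sentence gives that $\widehat{(\mbox{\rm id}_{P})} : \uTKK(P) \to \TKK(P)$ is a universal central $0$-extension. Finally I would check that this map $\widehat{(\mbox{\rm id}_P)}$ is exactly the map $\upsilon$ of Theorem~\ref{univTKK}: by formula~(\ref{ext}) of Theorem~\ref{Tuniv}, $\widehat{(\mbox{\rm id}_P)}$ sends $a + \sum_i\langle c_i, d_i\rangle + b$ to $a + \sum_i[c_i,d_i] + b = a + \sum_i \mu(\langle c_i,d_i\rangle) + b$, which is precisely $\upsilon$ as defined in~(\ref{ceTTK}), since $\mu(\langle c,d\rangle) = \nu(c,d)$ and $[c,d] = \nu(c,d)$ inside $\TKK(P)$ by~(\ref{tkk}). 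This identification is purely a matter of unwinding definitions.

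There is essentially no obstacle here; the only point requiring a moment's care is the bookkeeping that the two \emph{a priori} different descriptions of the comparison map — the universal arrow $\widehat{(\mbox{\rm id}_P)}$ from Theorem~\ref{Tuniv}(i) and the central extension $\upsilon$ from Theorem~\ref{univTKK} — genuinely coincide, so that the word ``universal'' earned in Theorem~\ref{TuTKK} transfers to the map already christened $\upsilon$. One could alternatively phrase the corollary without mentioning $\upsilon$ at all and simply assert the existence of a universal central $0$-extension $\uTKK(P) \to \TKK(P)$, in which case the proof is the single substitution $\forJP\TKK(P) = P$ into Theorem~\ref{TuTKK}.
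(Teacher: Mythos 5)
Your proposal is correct and matches the paper's (implicit) argument exactly: the paper treats this corollary as an immediate consequence of Theorem~\ref{TuTKK} applied with $L=\TKK(P)$, using that $\TKK(P)$ is $0$-perfect and that $\forJP(\TKK(P))=P$. Your extra check that $\widehat{(\mathrm{id}_P)}$ coincides with the map $\upsilon$ of Theorem~\ref{univTKK} is a correct and worthwhile piece of bookkeeping that the paper leaves tacit.
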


\begin{thm} \label{TrecU} Assume that $\upsilon:U\to L$ is a central
0-extension of a 0-perfect Lie algebra $L$. This extension is universal
if and only if $U$ is 0-perfect and centrally $0$-closed.
\end{thm}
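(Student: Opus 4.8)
The statement is the standard characterization of universal central extensions, transported into the $3$-graded setting where ``perfect'' is replaced by ``$0$-perfect'' and ``central extension'' by ``central $0$-extension.'' The plan is to mimic the classical proof (as in~\cite[Sect.\,7.9]{W}) with the modifications needed for the graded bookkeeping, using Theorems~\ref{TuTKK} and~\ref{TrecU}'s predecessors as black boxes wherever possible.

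For the forward direction, suppose $\upsilon:U\to L$ is a universal central $0$-extension. That $U$ is $0$-perfect is immediate from Corollary~\ref{0perf}. For central $0$-closedness I would argue as follows: let $\pi:E\to U$ be any central $0$-extension of $U$. Then the composite $\upsilon\circ\pi:E\to L$ is again a central $0$-extension of $L$ (one checks $\Kr(\upsilon\pi)\subseteq E_0$ from $\Kr(\pi)\subseteq E_0$ and $\Kr(\upsilon)\subseteq U_0$, and surjectivity is clear; $E$ need not be $0$-perfect, but a central $0$-extension only requires surjectivity with kernel in degree $0$). By universality of $\upsilon$ there is a unique graded homomorphism $\theta:U\to E$ with $\upsilon\pi\theta=\upsilon$; since $U$ is $0$-perfect and $\Kr(\upsilon)$ is central, $\pi\theta=\id_U$, so $\pi$ splits. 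Uniqueness of the splitting: if $\theta,\theta'$ both split $\pi$, then $\theta-\theta'$ lands in the central ideal $\Kr(\pi)\subseteq E_0$ and kills brackets, hence kills $[U_{-1},U_1]=U_0$ and all of $U_{\pm1}$ trivially (degree reasons), so $\theta=\theta'$ on the generating set $U_{\pm1}\cup U_0$, hence everywhere.

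For the converse, suppose $U$ is $0$-perfect and centrally $0$-closed; I must show $\upsilon:U\to L$ is universal. By Theorem~\ref{TuTKK} the map $\upsilon':\uTKK(\forJP U)\to U$ is a universal central $0$-extension of $U$; since $U$ is centrally $0$-closed, $\upsilon'$ splits, i.e.\ $U\cong\uTKK(\forJP U)$ as $3$-graded Lie algebras over $L$—more precisely the splitting $s:U\to\uTKK(\forJP U)$ is an isomorphism (it is injective because $\upsilon'$ has central kernel and $U$ is $0$-perfect, surjective because $\uTKK(\forJP U)$ is $0$-perfect). Now $\forJP U\cong\forJP L$ via $\forJP\upsilon$ (a central $0$-extension is an isomorphism on the $\pm1$ parts), so $\uTKK(\forJP U)\cong\uTKK(\forJP L)$, and under these identifications $\upsilon$ is carried to the map $\uTKK(\forJP L)\to L$ of Theorem~\ref{TuTKK}, which is universal. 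Transporting universality back along the isomorphism gives that $\upsilon$ itself is universal.

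The main obstacle I anticipate is not any single deep step but keeping the graded hypotheses honest: one must repeatedly verify that maps constructed by ``spreading through $\uTKK$'' or by splitting have kernels concentrated in degree $0$, and that $0$-perfectness (rather than full perfectness) is enough to force factorizations and rigidity of splittings. The one genuinely delicate point is proving the splitting $s:U\to\uTKK(\forJP U)$ is an \emph{isomorphism} and not merely a section: surjectivity follows since its image contains $U_{\pm1}$ (identified with the generators) and $\uTKK(\forJP U)$ is generated in degrees $\pm1$; injectivity needs that $\Kr(\upsilon')$ meets $s(U)$ trivially, which follows because $\Kr(\upsilon')\subseteq\uTKK(\forJP U)_0$ is central and $s$ is a section of $\upsilon'$. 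Once this is in hand, the rest is transport of structure.
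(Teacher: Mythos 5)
Your converse direction is fine and in fact takes a genuinely different route from the paper: the paper forms the fibered product $A=\{(k,u):\varphi(k)=\upsilon(u)\}\subseteq K\oplus U$, checks that $\pi_U:A\to U$ is a central $0$-extension, and splits it; you instead apply Theorem~\ref{TuTKK} to $U$, use central $0$-closedness to conclude $\uTKK(\forJP U)\to U$ is an isomorphism, identify $\forJP U\cong\forJP L$ via $\forJP\upsilon$, and transport universality of $\uTKK(\forJP L)\to L$ along these isomorphisms. This is legitimate (it only uses results proved before Theorem~\ref{TrecU}, so there is no circularity) and somewhat shorter, at the cost of routing an elementary statement through the $\uTKK$ machinery; the paper's pullback argument is self-contained and handles an arbitrary $\varphi:K\to L$ directly.

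The forward direction, however, has a genuine gap. You compose $\pi:E\to U$ with $\upsilon$ and apply universality of $\upsilon$ to $\upsilon\circ\pi$, justifying this with the remark that ``a central $0$-extension only requires surjectivity with kernel in degree $0$.'' That is a misstatement of the definition: a central $0$-extension also requires the kernel to be \emph{central}, and this is precisely the delicate point --- in the ungraded theory a composite of central extensions need not be central, which is why the paper does not compose directly but restricts to the subalgebra $K'=K_{-1}+[K_{-1},K_1]+K_1$ of the extension of $U$, where centrality of the kernel of the composite follows by degree reasons. What you verify (surjectivity and $\Kr(\upsilon\pi)\subseteq E_0$) is not sufficient to invoke universality. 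The claim you need does hold here, but only by an argument you omit: for $n\in\Kr(\upsilon\pi)$ one has $\pi(n)\in\Kr(\upsilon)\subseteq\Cen(U)$, so $[n,E_{\pm1}]\subseteq\Kr(\pi)\cap E_{\pm1}=0$ since $\Kr(\pi)\subseteq E_0$; and since $U$ is $0$-perfect (already secured via Corollary~\ref{0perf}), $E_0=[E_{-1},E_1]+\Kr(\pi)$, whence $[n,E_0]\subseteq[[n,E_{-1}],E_1]+[E_{-1},[n,E_1]]+[n,\Kr(\pi)]=0$, so $\Kr(\upsilon\pi)\subseteq\Cen(E)$. Either insert this computation or pass to the subalgebra $E_{-1}+[E_{-1},E_1]+E_1$ as the paper does; as written, the crucial step rests on an incorrect reading of ``central $0$-extension.'' The remainder of your forward direction (that $\pi\theta=\id_U$, and uniqueness of the splitting by $0$-perfectness and degree reasons) is correct.
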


\begin{proof}
Assume first that $\upsilon:U\to L$ is universal and note that $U$ is
$0$-perfect by Corollary~\ref{0perf}.

Let $\varphi:K\to U$ be a central 0-extension of $U$, and consider the
subalgebra $K'=K_{-1}+[K_{-1},K_{1}]+K_{1}$ of $K$ together with the
inclusion homomorphism $\iota:K'\to K$. It is straightforward to check
that  $\upsilon\circ\varphi\circ\iota$ is an epimorphism and
$\Kr(\upsilon\circ\varphi\circ\iota)\sset K'_0$. Moreover,
$\varphi(k)\in \Kr(\upsilon)\subseteq \Cen(U)$ for every
$k\in\Kr(\upsilon\circ\varphi\circ\iota)$ and therefore
$[k,K_{\pm 1}]\in \Kr(\varphi)\cap K_{\pm 1}=0$. Thus
$\Kr(\upsilon\circ\varphi\circ\iota)\subseteq \Cen(K')$ and
$\upsilon\circ\varphi\circ\iota$ is a central 0-extension of $L$.

By universality of $\upsilon:U\to L$, there exists a map $\psi:U\to K'$
such that $(\upsilon\circ\varphi\circ\iota)\circ\psi=\upsilon$. It
follows that  $\varphi\circ\iota\circ\psi=\mbox{\rm id}_U$, so
$\iota\circ\psi$ is the splitting map for $\varphi$. Since $U$ is
$0$-perfect, the splitting map is unique.

Assume now that $U$ is 0-perfect and centrally 0-closed, and consider a
central 0-extension $\varphi:K\to L$.

To construct a map from $U$ to $K$ we consider the direct sum of
algebras $K\oplus U$ with the 3-grading $(K\oplus U)_i=K_i\oplus U_i$
and the graded subalgebra $A=\{(k,u):\varphi(k)=\upsilon(u)\}$.  It is
easy to see that the maps $\pi_K:A\to K$, $\pi_K(k,u)=k$ and $\pi_U:A\to
U$, $\pi_U(k,u)=u$ are graded epimorphisms, and that
$\varphi\circ \pi_K=\upsilon\circ\pi_U$.

We note that $\pi_U:A\to U$ is a central 0-extension of $U$ because
$\Kr(\pi_U)=\{(k,0):k\in\Kr(\varphi)\}\subseteq \Cen(A)\cap A_0$. Thus
there is a splitting morphism $\psi:U\to A$ with $\pi_U\circ
\psi=\mbox{\rm id}_U$. It follows that $\varphi\circ
(\pi_K\circ\psi)=(\upsilon\circ\pi_U)\circ\psi=\upsilon$.
The map $\pi_K\circ\psi$ is unique with this property since $U$ is
$0$-perfect.
\end{proof}

\begin{cor} \label{catequivJP} For a 3-graded Lie algebra $L$ the map
$\upsilon:\uTKK(\forJP L)\to L$, defined in Theorem~\ref{TuTKK}, is an
isomorphism if and only if $L$ is 0-perfect centrally 0-closed.

In particular, the category of Jordan Pairs $\JP$ is equivalent to the
category of 0-perfect centrally 0-closed 3-graded Lie algebras.
\end{cor}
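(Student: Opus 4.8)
The plan is to deduce Corollary~\ref{catequivJP} directly from the machinery already in place, namely Theorems~\ref{TuTKK} and~\ref{TrecU} together with Corollary~\ref{uTKKtoTKK}. First I would observe that if $\upsilon:\uTKK(\forJP L)\to L$ is an isomorphism, then $L$ inherits all properties of $\uTKK(\forJP L)$; in particular $L$ is $0$-perfect (as noted after the definition, or by Corollary~\ref{0perf}), and it is centrally $0$-closed because by Corollary~\ref{uTKKtoTKK} and Theorem~\ref{TrecU} the universal central $0$-extension $\uTKK(\forJP L)\to \TKK(\forJP L)$ being universal forces $\uTKK(\forJP L)$ to be centrally $0$-closed. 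Transporting along the isomorphism $\upsilon$ gives the same for $L$.

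For the converse, suppose $L$ is $0$-perfect and centrally $0$-closed. By Theorem~\ref{TuTKK}, $\upsilon:\uTKK(\forJP L)\to L$ is a universal central $0$-extension of $L$. But a centrally $0$-closed algebra admits only split central $0$-extensions, and in particular the identity map $\id_L:L\to L$ is itself a (trivial) central $0$-extension; by universality $\upsilon$ factors uniquely through $\id_L$, i.e.\ there is a unique $\psi:L\to\uTKK(\forJP L)$ with $\upsilon\circ\psi=\id_L$, while on the other hand the splitting property of $L$ applied to the $0$-extension $\upsilon$ itself produces a section, and uniqueness of the section together with $0$-perfectness of $\uTKK(\forJP L)$ (Corollary~\ref{0perf}) forces $\psi\circ\upsilon=\id_{\uTKK(\forJP L)}$. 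Hence $\upsilon$ is an isomorphism. The cleaner way to phrase this: a universal central $0$-extension of an algebra that is itself centrally $0$-closed must be an isomorphism, since the $0$-extension $\id$ witnesses that the universal one splits, and $0$-perfectness kills the ambiguity.

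The final statement, the equivalence of categories, then follows by combining this with the adjunction results from Section~6.1. We already know from the discussion of $\uTKK$ as left adjoint that $\uTKK:\JP\to\Lthr$ is full and faithful, so $\JP$ is equivalent to the full subcategory of $\Lthr$ consisting of those $L$ isomorphic to some $\uTKK(P)$; equivalently, those $L$ for which the counit $\upsilon:\uTKK(\forJP L)\to L$ is an isomorphism. By the first part of the corollary this is exactly the class of $0$-perfect centrally $0$-closed $3$-graded Lie algebras, which is a full subcategory of $\Lthr$. This yields the asserted equivalence.

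I do not anticipate a genuine obstacle here, as all the real work was done in Theorems~\ref{TuTKK} and~\ref{TrecU}; the only point requiring a little care is the uniqueness bookkeeping in the argument that a universal central $0$-extension of a centrally $0$-closed algebra is an isomorphism — one must invoke $0$-perfectness of the source (Corollary~\ref{0perf}) at the right moment to conclude that the two composites $\psi\circ\upsilon$ and $\id$ agree, since a priori universality only gives a one-sided factorization. Once that is pinned down, the categorical equivalence is immediate from fullness and faithfulness of $\uTKK$.
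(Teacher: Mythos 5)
Your argument is correct and follows essentially the same route as the paper: the forward direction via Corollary~\ref{uTKKtoTKK} and Theorem~\ref{TrecU}, the converse via the splitting of the central $0$-extension $\upsilon$ plus $0$-perfectness of $\uTKK(\forJP L)$, and the equivalence from fullness and faithfulness of $\uTKK$. One small slip in wording: universality of $\upsilon$ does not produce a map $L\to\uTKK(\forJP L)$ (it only factors other extensions of $L$ through $\upsilon$); the section $\psi$ comes, as you also note, from $L$ being centrally $0$-closed applied to the extension $\upsilon$ itself, after which $0$-perfectness forces $\psi\circ\upsilon=\id$, so the proof stands as you have it.
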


\begin{proof} Recall that one has the full and faithful functor
$\uTKK:\JP\to \Lthr$. If $L$ is is 0-perfect and centrally $0$-closed,
then the map  $\upsilon:\uTKK(\forJP L)\to L$ is an isomorphism. The
converse follows immediately from Corollary~\ref{uTKKtoTKK} and
Theorem~\ref{TrecU}.
\end{proof}

\begin{cor} \label{catequivJTS}
The category of Jordan triple systems $\JTS$ is equivalent to the
category of 0-perfect centrally 0-closed 3-graded Lie algebras with
involution.
\end{cor}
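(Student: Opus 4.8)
The plan is to mirror the argument for Jordan pairs (Corollary~\ref{catequivJP}) in the involutary setting, using the category equivalences already established. First I would recall that $\uTKK:\JTS\to\Lthrinv$ is a full and faithful left adjoint to the forgetful functor $\forJTS$ (as noted in the discussion following Theorem~\ref{Tuniv}), so it identifies $\JTS$ with a full subcategory of $\Lthrinv$; the task is to identify that subcategory intrinsically. The natural candidate is: a $3$-graded Lie algebra with anti-graded involution $(L,\varepsilon)$ lies in the image precisely when the underlying $3$-graded Lie algebra $L$ is $0$-perfect and centrally $0$-closed. This would follow if the map $\upsilon:(\uTKK(\forJTS(L,\varepsilon)),\widehat\kappa)\to(L,\varepsilon)$ from part~(ii) of Theorem~\ref{Tuniv} is an isomorphism of objects in $\Lthrinv$ exactly under those hypotheses.

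The key steps are as follows. One direction is immediate: if $(L,\varepsilon)=\uTKK(T)$ for some Jordan triple system $T$, then the underlying Lie algebra is $\uTKK(\prJTS T)$, which by Corollary~\ref{uTKKtoTKK} and Theorem~\ref{TrecU} is $0$-perfect and centrally $0$-closed. Conversely, suppose $L$ is $0$-perfect and centrally $0$-closed. By Corollary~\ref{catequivJP} the underlying map $\upsilon:\uTKK(\forJP L)\to L$ is an isomorphism of $3$-graded Lie algebras. It remains to check that this isomorphism is compatible with the involutions, i.e. that it is a morphism in $\Lthrinv$. But by part~(ii) of Theorem~\ref{Tuniv}, applied to the identity triple-system homomorphism $T=\forJTS(L,\varepsilon)\to\forJTS(L,\varepsilon)$, the extension $\upsilon$ is automatically involutary from $(\uTKK(\prJTS T),\widehat\kappa)$ to $(L,\varepsilon)$; since it is also bijective, it is an isomorphism in $\Lthrinv$. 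Hence $(L,\varepsilon)\cong\uTKK(T)$ lies in the image of the functor, and the subcategory is exactly the $0$-perfect centrally $0$-closed $3$-graded Lie algebras with involution.

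I would then assemble these observations into the statement of the corollary: via $\uTKK$ and $\forJTS$, the category $\JTS$ is equivalent to the full subcategory of $\Lthrinv$ consisting of those $(L,\varepsilon)$ for which $L$ is $0$-perfect and centrally $0$-closed. The main point to be careful about — and the only place where something beyond pure formality is needed — is verifying that the involution is transported correctly, i.e. that the canonical anti-graded involution $\widehat\kappa$ on $\uTKK(\prJTS T)$ is carried by $\upsilon$ to the given $\varepsilon$ rather than to some other anti-graded involution. This is precisely the content of part~(ii) of Theorem~\ref{Tuniv} together with the uniqueness of the extension, so the obstacle is essentially bookkeeping rather than a genuine difficulty; the substantive work has already been done in Theorem~\ref{TrecU} and Theorem~\ref{Tuniv}.
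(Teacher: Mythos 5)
Your proposal is correct and follows essentially the same route as the paper: reduce to the Jordan pair case via Corollary~\ref{catequivJP} and transport the involution using the universal property in part~(ii) of Theorem~\ref{Tuniv} (the paper phrases this as applying $\uTKK$ to the involutory pair isomorphism $(\forJP L,\widetilde\varepsilon)\cong(\prJTS L_1,\kappa)$). The only imprecision is that the involutary map from Theorem~\ref{Tuniv}(ii) has domain $\uTKK(\prJTS L_1)$ rather than $\uTKK(\forJP L)$, so its bijectivity is not literally that of the map in Corollary~\ref{catequivJP}; but the two differ by $\uTKK$ applied to the pair isomorphism $(\varepsilon|_{L_1},\id)$ --- exactly the paper's $\uTKK\varphi$ --- so the gap is purely notational.
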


\begin{proof} Let $L$ be a 0-perfect centrally 0-closed Lie algebra with
involution $\varepsilon$. Recall from Section~\ref{3grInv} that the
restriction $\widetilde\varepsilon$ of $\varepsilon$ onto the pair
$(L_{-1}, L_1)$ is an involution of the Jordan pair $\forJP L$, and
there exists an involutory isomorphism $\varphi$ from the Jordan pair
with involution $(\forJP L, \widetilde\varepsilon)$ onto $(\prJTS
L_1,\kappa)$ for the Jordan triple system $L_1=\forJTS(L,\varepsilon)$
and the canonical involution $\kappa$ on $\prJTS L_1$. Now one can
verify readily that $\uTKK\varphi$ is an isomorphism from
$(L,\varepsilon)$ onto $(\uTKK L_1,\kappa)$.
\end{proof}

\begin{cor} \label{catequivJA}
The category of unital Jordan algebras $\JA$ is equivalent to the
category of centrally closed ${\rm A}_1$-graded Lie algebras.
\end{cor}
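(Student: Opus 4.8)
The plan is to mirror the argument of Corollary~\ref{catequivJTS}, reducing the statement about $\mathrm{A}_1$-graded Lie algebras to the already-established equivalence for Jordan pairs with invertible elements, but with the twist that the relevant homology condition becomes the \emph{ungraded} one. Concretely: given a Jordan algebra $J$, Corollary~\ref{univTKKforJA} and Theorem~\ref{univTKK} exhibit $\uTKK(\prJA J)$ as an $\mathrm{A}_1$-graded Lie algebra, and Theorem~\ref{Tuniv}(iii) together with the adjointness discussion shows $\uTKK:\JA\to\LAone$ is full and faithful. So the task is to identify its essential image as the category of centrally closed $\mathrm{A}_1$-graded Lie algebras. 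First I would observe that every $\mathrm{A}_1$-graded $L=\Ldecomp$ carries the anti-graded involution $\varepsilon=(-\tfrac12(\ad e)^2,-\tfrac12(\ad f)^2)$ used in the proof of Theorem~\ref{Tuniv}(iii), so $(L,\varepsilon)$ is an object of $\Lthrinv$; moreover $\mathrm{A}_1$-graded algebras are automatically 0-perfect by definition. Hence by Corollary~\ref{catequivJTS} the map $\upsilon:\uTKK(\forJTS(L,\varepsilon))\to L$ is an isomorphism \emph{iff} $(L,\varepsilon)$ is centrally 0-closed, and $\forJTS(L,\varepsilon)$ is precisely the Jordan triple system underlying the Jordan algebra $\forJA(L,\frs)=L_1$ (with product $a\Jmult b=[[a,f],b]$). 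So it remains to compare ``centrally closed'' with ``centrally 0-closed'' in the presence of the $\sltwo$-triple.

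The key step is therefore: for an $\mathrm{A}_1$-graded Lie algebra $L$, show that $L$ is centrally closed (every central extension splits uniquely) if and only if $L$ is centrally 0-closed as a 3-graded algebra (every central 0-extension splits uniquely). One direction is formal: a central 0-extension $\varphi:K\to L$ is in particular a central extension, and the $\sltwo$-triple lifts uniquely (its elements sit in $K_{\pm1}$ and $[K_{-1},K_1]$, which $\varphi$ maps isomorphically onto the corresponding pieces since $\Kr\varphi\subseteq K_0$ is central), so a unique splitting of it as a plain central extension restricts to a splitting as a 0-extension. For the converse I would take an arbitrary central extension $\varphi:K\to L$, pull back the $\sltwo$-triple $\langle h,e,f\rangle$ to a canonical $\sltwo$-triple in $K$ (lift $e,f$ arbitrarily to $\tilde e,\tilde f$, set $\tilde h=[\tilde e,\tilde f]$; since $\Kr\varphi$ is central one checks $[\tilde h,\tilde e]-2\tilde e$ and $[\tilde h,\tilde f]+2\tilde f$ lie in $\Kr\varphi$ and can be corrected away using the $\ad\tilde h$-eigenspace decomposition and the no-2-torsion hypothesis), giving $K$ an $\mathrm{A}_1$-grading for which $\varphi$ becomes graded with $\Kr\varphi\subseteq K_0$ — i.e., a central 0-extension. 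Centrally 0-closed then yields a unique splitting, which is automatically $\sltwo$-triple preserving, i.e. a morphism in $\LAone$.

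With this equivalence of the two closedness notions in hand, the corollary follows by a short chase: $\uTKK(J)$ is centrally closed because, being a universal central 0-extension of a 0-perfect algebra (Corollary~\ref{uTKKtoTKK}, Theorem~\ref{TrecU}), it is centrally 0-closed, hence centrally closed; and conversely a centrally closed $\mathrm{A}_1$-graded $L$ is centrally 0-closed, so $\upsilon:\uTKK(\forJA L)\to L$ is an isomorphism in $\LAone$, exhibiting $L$ in the image of the functor. I expect the main obstacle to be the converse direction of the closedness comparison — specifically, verifying carefully that the $\sltwo$-triple can be lifted canonically through an arbitrary (not-a-priori-graded) central extension and that this lift is forced, so that the grading on $K$ and the resulting splitting are genuinely unique; the no-2-torsion assumption and the centrality of the kernel are exactly what make the eigenspace corrections go through, and this should be spelled out rather than waved at.
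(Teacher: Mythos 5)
Your overall strategy is sound, and it is genuinely different from the paper's. The paper's proof is a direct one-paragraph argument for the inclusion ``centrally closed $\Rightarrow$ in the image'': given a centrally closed ${\rm A}_1$-graded $L$, it applies Theorem~\ref{Tuniv}(iii) to $\id : L_1 \to \forJA(L,\frs)$ to obtain the ${\rm A}_1$-graded epimorphism $\widehat{\id}:\uTKK(L_1)\to L$, observes that its kernel sits in $(\uTKK(L_1))_0$ and is central, splits it using central closedness of $L$, and concludes the splitting is onto because $\uTKK(L_1)$ is perfect --- no comparison of ``centrally closed'' with ``centrally $0$-closed'' and no detour through Corollary~\ref{catequivJTS} (for your purposes the relevant ``iff'' criterion is really Corollary~\ref{catequivJP}, since Corollary~\ref{catequivJTS} is stated as a category equivalence). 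What your route buys is exactly the half that the paper's proof leaves implicit, namely that $\uTKK(J)$ itself is centrally closed (not merely centrally $0$-closed), which is needed for the image of the functor to land in the stated category and which the paper only covers by the ``well-known facts'' remark preceding Theorem~C; your comparison lemma makes this explicit, so in that respect your proposal is more complete than the printed argument.

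The one step that must be repaired is the lifting argument, and the repair is more than bookkeeping. Correcting $\tilde e$ by a central element $c$ replaces $2\tilde e$ by $2\tilde e+2c$ while leaving $[\tilde h,\tilde e]$ unchanged (and leaves $\tilde h=[\tilde e,\tilde f]$ unchanged as well), so killing $[\tilde h,\tilde e]-2\tilde e$ this way forces you to divide by $2$: ``no $2$-torsion'' is not enough, and over $k=\Z$ the eigenspace decomposition of $K$ under $\ad\tilde h$ need not exist at all. The clean version works over a field of characteristic $\ne 2,3$, i.e.\ the hypothesis the paper itself imposes in Section~\ref{homi}: take any lift $\tilde h$ of $h$; since $\Kr(\varphi)$ is central, $\ad\tilde h$ is annihilated by $t^2(t-2)^2(t+2)^2$, so $K$ decomposes into generalized eigenspaces $K^{(0)}\oplus K^{(2)}\oplus K^{(-2)}$ with $[K^{(a)},K^{(b)}]\subseteq K^{(a+b)}$ and $K^{(\pm 4)}=0$ (this is where characteristic $\ne 3$ enters, since $4=-2$ in characteristic $3$), and $\Kr(\varphi)\subseteq K^{(0)}$. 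This already exhibits $\varphi$ as a central $0$-extension, so central $0$-closedness splits it; you do not even need to manufacture an honest $\sltwo$-triple in $K$ first, though one falls out for free, because the $K^{(\pm 2)}$-components of $\tilde e,\tilde f$ are genuine eigenvectors (their defect lies in $\Kr(\varphi)\cap K^{(\pm 2)}=0$). With that step spelled out under the correct hypotheses, your argument is correct.
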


\begin{proof} Let $L$ be a centrally closed Lie algebra with an ${\rm
A}_1$-grading. Then $L_1=\forJA L$ is a unital Jordan algebra and the
identity map $\id:L_1\to \forJA L$ lifts to the ${\rm A}_1$-graded
morphism $\widehat{\id}$ from $\uTKK L_1$ onto $L$. Since the kernel of
$\widehat{\id}$ is an ideal contained in $(\uTKK L_1)_0$ and hence
belongs to the center of $\uTKK L_1$, there is a splitting map for
$\widehat{\id}$ which is surjective since $\uTKK L_1$ is perfect.
\end{proof}

\subsection{Homological characterization of the extensions}
\label{homi}
The condition on extensions in Theorem~\ref{TrecU} has natural
homological characterizations which we establish next using a graded
version of the classical homology theory. Throughout this section we
assume that $k$ is a field of characteristic different from 2 and 3.

Recall that the homology groups $H_{*}(L)=H_{*}(L,k)$ of a Lie algebra
$L$ with coefficients in the one-dimensional trivial module $k$ can be
defined as the homology groups of the Chevalley-Eilenberg chain complex
$$
...\longrightarrow \wedge^{n+1} L \stackrel{\delta_n}{\longrightarrow}
\wedge^{n} L\longrightarrow ...
$$
where the boundary map $\delta_{n}$ is given by
\begin{eqnarray}\label{bo}
\delta_n\! (x_1\wedge x_2\wedge ...\wedge x_{n+1})\nonumber\\
=\sum_{1\leq i<j\leq n+1}(-1)^{i+j}[x_i,x_j]\wedge x_1\wedge ...\wedge
\widehat x_i\wedge ...\wedge \widehat x_j\wedge ...\wedge x_{n+1},
\end{eqnarray}
and $\widehat x_i$ indicates that $x_i$ is omitted.

The cohomology groups $H^*(L,M)$ of $L$ with coefficients in the any
trivial $L$-module $M$ are the cohomology groups of the dual cochain
complex
$$
...\longrightarrow\Hom _k(\wedge^n
L,M)\stackrel{\delta^n}{\longrightarrow}
\Hom _k(\wedge^{n+1} L,M)\longrightarrow ...
$$
with $\delta^n(f)=f\circ \delta_n$.

If $L$ is a graded algebra and $M$ is a graded module, one can introduce
graded versions of the complexes above by setting
$$(\wedge^{n} L)_i=\sum_{i_1+i_2+...+i_n=i} L_{i_1}\wedge
L_{i_2}\wedge...\wedge L_{i_n} \mbox{\ \ \ and}
$$ 
$$(\Hom _k(\wedge^n L,M))_i=\{f\in
\Hom _k(\wedge^n L,M):f(L_{i_1}\wedge L_{i_2}\wedge...\wedge
L_{i_n})\subseteq M_{i_1+i_2+...+i_n+i}\}$$ respectively and noting that
the maps $\delta_*$ and $\delta^*$ are graded. In this case, the
homology groups $H_{*}^{\rm gr}(L)$ of the graded Lie algebra $L$ are
the homology groups of the complex
$$
...\longrightarrow (\wedge^{n+1} L)_0
\stackrel{\delta_n}{\longrightarrow} (\wedge^{n} L)_0\longrightarrow
...
$$
and the cohomology groups $H^{*}_{\rm gr}(L,M)$ of the graded Lie
algebra $L$ are the cohomology groups of the complex
$$
...\longrightarrow(\Hom _k(\wedge^n
L,M))_0\stackrel{\delta^n}{\longrightarrow}
(\Hom _k(\wedge^{n+1} L,M))_0\longrightarrow ...
$$

\begin{thm} \label{homolal}
Assume that $L=L_{-1}\oplus L_0\oplus L_1$ is a $0$-perfect $3$-graded
Lie algebra  over a field $k$. Then following are equivalent.

\begin{itemize}
\item[{\rm (i)}] Every central 0-extension of $L$ splits.
\item[{\rm (ii)}] $H_2^{\rm gr}(L)=0$.
\item[{\rm (iii)}] $H^2_{\rm gr}(L,M)=0$ for every module $M$ such
    that $M=M_0$.
\end{itemize}
\end{thm}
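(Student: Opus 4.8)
The plan is to prove the cycle of implications (i) $\Rightarrow$ (iii) $\Rightarrow$ (ii) $\Rightarrow$ (i), mirroring the classical proof for ungraded perfect Lie algebras (as in~\cite[Sect.\,7.9]{W}) but keeping track of the grading throughout. The basic dictionary is that central $0$-extensions of $L$ by a trivial module $M$ with $M = M_0$ are classified by $H^2_{\rm gr}(L,M)$: a $2$-cocycle $f \in (\Hom_k(\wedge^2 L, M))_0$ determines the extension $L \oplus M$ with bracket $[x+m, y+n] = [x,y] + f(x,y)$, and the condition $f \in (\wedge^2 L)_0^*$ forces the kernel $M$ to sit in degree $0$, i.e.\ the extension is a $0$-extension. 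Coboundaries correspond to split extensions. This already gives (iii) $\Rightarrow$ (i) once one checks that \emph{every} central $0$-extension $\varphi\colon K \to L$ is equivalent to one of this cocycle form: choose a graded $k$-linear section $s\colon L \to K$ (possible since we are over a field and $\varphi$ is graded surjective with kernel in degree $0$), and set $f(x,y) = s[x,y] - [sx, sy]$, which lands in $\Kr(\varphi) \subseteq K_0$ and is a graded $2$-cocycle with values in the trivial module $\Kr(\varphi)$.

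For (i) $\Rightarrow$ (iii), I would argue contrapositively: a nonzero class in $H^2_{\rm gr}(L,M)$ gives a non-split central $0$-extension by the construction just described, contradicting (i). The only thing to verify is that a cocycle which is \emph{not} a coboundary yields an extension that is \emph{not} split \emph{as a $0$-extension} — but since the kernel is concentrated in degree $0$, any graded splitting is automatically a graded-linear section of the required type, and its failure to be an algebra map is exactly the cohomology class, so non-triviality of the class is equivalent to non-splitness. The equivalence (ii) $\Leftrightarrow$ (iii) is the graded analogue of the universal-coefficients-type statement relating $H_2$ and $H^2$ for perfect algebras. Here one uses that $L$ is $0$-perfect: I would show $H_1^{\rm gr}(L) = 0$ (this is precisely $L_0 = [L_{-1},L_1]$ together with $[L_{\pm1}, L_{\mp1}] \ni$ enough brackets — more carefully, $(\wedge^1 L)_0 = L_0$ and the image of $\delta_1$ on $(\wedge^2 L)_0 = (L_{-1}\wedge L_1) \oplus (L_0 \wedge L_0)$ is $[L_{-1},L_1] + [L_0,L_0] = L_0$), and then the graded version of the standard argument shows $H^2_{\rm gr}(L,M) \cong \Hom_k(H_2^{\rm gr}(L), M)$ functorially in $M$, so one vanishes for all such $M$ iff the other vanishes (taking $M = H_2^{\rm gr}(L)$ for the nontrivial direction).

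The main obstacle I anticipate is the bookkeeping in establishing $H^2_{\rm gr}(L,M) \cong \Hom_k(H_2^{\rm gr}(L),M)$: one must check that the graded Chevalley--Eilenberg complex in degree $0$ behaves well, that $H_1^{\rm gr}(L)$ indeed vanishes (so that the relevant piece of the universal coefficient sequence collapses), and that everything is compatible with the grading restrictions on both chains and cochains. Over a field this is pure linear algebra — the sequence $(\wedge^3 L)_0 \to (\wedge^2 L)_0 \to (\wedge^1 L)_0$ splits as vector spaces — so the isomorphism $H^2_{\rm gr}(L,M) \cong \Hom_k(H_2^{\rm gr}(L),M)$ follows by dualizing, provided $H_1^{\rm gr}(L) = 0$. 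I would state the lemma $H_1^{\rm gr}(L) = 0 \Leftrightarrow L$ is $0$-perfect as a preliminary observation, then dualize. The rest is routine transcription of~\cite[Sect.\,7.9]{W} with the subscript $0$ inserted throughout, and I would keep the write-up brief as promised, citing the ungraded case for the formal skeleton and only spelling out where the grading enters (namely: sections can be chosen graded, kernels of $0$-extensions are concentrated in degree $0$, so trivial modules of the form $M = M_0$ are exactly the right test objects).
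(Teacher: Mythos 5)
Your proof is correct, but it takes a genuinely different route from the paper's on two of the three legs. The step (iii) $\Rightarrow$ (i) is the same in both: choose a graded linear section $\eta$ of the central $0$-extension $\varphi\colon K\to L$, form the graded $2$-cocycle $\sigma(x\wedge y)=[\eta x,\eta y]-\eta[x,y]$ with values in the trivial degree-$0$ module $\Kr(\varphi)$, and correct $\eta$ by the coboundary $\tau$ to get a splitting homomorphism. The differences: the paper closes the cycle as (i) $\Rightarrow$ (ii) $\Rightarrow$ (iii), proving (i) $\Rightarrow$ (ii) by invoking Corollary~\ref{catequivJP} to identify $L$ with $\uTKK(\forJP L)$, so that $[L_{-1},L_1]\simeq\langle L_{-1},L_1\rangle$ and a graded $2$-cycle (pushed into $L_{-1}\wedge L_1$ modulo ${\rm Im}(\delta_2)$) is seen to lie in $A(L_{-1}\otimes L_1)\subseteq{\rm Im}(\delta_2)$; and proving (ii) $\Rightarrow$ (iii) by the direct cochain computation that a graded $2$-cocycle kills graded $2$-cycles and hence factors as $\tau_\sigma\circ\delta_1$ through $L_0=[L_{-1},L_1]$. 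You instead prove (i) $\Rightarrow$ (iii) generically (a nonzero class yields a non-split central $0$-extension $L\oplus M$ with twisted bracket) and obtain (ii) $\Leftrightarrow$ (iii) from the field-coefficient isomorphism $H^2_{\rm gr}(L,M)\cong\Hom_k(H_2^{\rm gr}(L),M)$, testing with $M=H_2^{\rm gr}(L)$. Your route never uses $\uTKK$ and stays closer to the classical ungraded argument of~\cite[Sect.\,7.9]{W}; the paper's route is more hands-on but makes visible the link between $H_2^{\rm gr}(L)$ and the kernel of the universal central $0$-extension, which is the theme of the section.

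Two small repairs. First, you should record (as the paper does) that a graded module with $M=M_0$ over a $0$-perfect $L$ is automatically a trivial module, since $L_{\pm1}M_0\subseteq M_{\pm1}=0$ and $L_0=[L_{-1},L_1]$; this is needed both to make sense of $H^2_{\rm gr}(L,M)$ as defined (trivial coefficients only) and to make $M$ central in your extension $L\oplus M$. Second, your preliminary claim that $H_1^{\rm gr}(L)=0$ is \emph{equivalent} to $0$-perfectness is false as stated: $H_1^{\rm gr}(L)=L_0/([L_{-1},L_1]+[L_0,L_0])$, so for instance $L=L_0$ perfect has $H_1^{\rm gr}(L)=0$ without being $0$-perfect. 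Only the implication you actually use holds, and over a field the isomorphism $H^2_{\rm gr}(L,M)\cong\Hom_k(H_2^{\rm gr}(L),M)$ needs no hypothesis on $H_1$ at all, because $\Hom_k(-,M)$ is exact; so neither point damages the argument.
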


\begin{proof}
Assume that (i) holds. Then every central 0-extension of $L$ splits in a
unique way since $L$ is $0$-perfect, so by Corollary~\ref{catequivJP}
the map $\upsilon=\widehat{(\mbox{\rm id}_{\forJP L})}$ is an
isomorphism.

Consider $\sum x_i\wedge y_i \in (\Kr(\delta_1))_0$. Since
$L_0=[L_{-1},L_1]$ implies that $L_0\wedge L_0\subseteq L_{-1}\wedge
L_1+{\rm Im}(\delta_2)$, we can assume that $x_i\in L_{-1}$ and $y_i\in
L_1$. Since $\sum [x_i,y_i]=0$ and $[L_{-1},L_1]\simeq \langle
L_{-1},L_1\rangle$, in $\langle L_{-1},L_1\rangle$ one has $\sum \langle
x_i,y_i\rangle=0$, so $\sum x_i\otimes y_i \in A(L_{-1}\otimes L_1)$. It
is only left to notice that identifying $L_{-1}\otimes L_1$ with
$L_{-1}\wedge L_1$, we have $A(L_{-1}\otimes L_1)\subseteq {\rm
Im}(\delta_2)$. Therefore $H_2^{\rm gr}(L)=0$.

Assume now that $H_2^{\rm gr}(L)=0$ and $M$ is an $L$-module with the
trivial grading $M=M_0$. Then $L_1M=L_1M_0\subseteq M_1=0$ and hence
$LM=([L_1,L_1]+L_1)M=0$. Thus $M$ is a trivial $L$-module.
If $\sigma:L\wedge L\to M$ is a 2-cocycle in $(\Kr(\delta^2))_0$, then
it is easy to see that $\Kr(\delta_1)\subseteq \Kr(\sigma)$. So one can
define $\tau_{\sigma}:L_0\to M_0$ by setting
$\tau_{\sigma}(x)=\sigma(\sum a_i\wedge b_i)$ for every $x=\sum
[a_i,b_i]\in L_0=[L_{-1},L_1]$. Clearly, $\sigma=\tau_{\sigma}\circ
\delta_1$, so $\sigma$ is a co-boundary. Hence assertion (ii) implies (iii).

Assume finally that $H^2_{\rm gr}(L,M)=0$ for every module $M$ such that
$M=M_0$. Let $\varphi:K\to L$ be a central 0-extension of $L$. Since
$\Kr(\varphi)\subseteq K_0$ and $L_0$ is projective as $k$-module, there is a linear
map $\eta:L\to K$ such that $\eta$ preserves the grading and
$\varphi\circ \eta=\id_L$.

We consider $\Kr(\varphi)$ as a trivial $L$-module with the grading
$\Kr(\varphi)=\Kr(\varphi)_0$. It is known that the map $\sigma:L\wedge
L\to \Kr(\varphi)$ defined by $\sigma(x\wedge
y)=[\eta(x),\eta(y)]-\eta([x,y])$ is a 2-cocycle. Moreover, for every
$x_i\in L_i$ and $y_j\in L_j$, we have $\sigma(x_i\wedge
y_j)=[\eta(x_i),\eta(y_j)]-\eta([x_i,y_j])\in M\cap K_{i+j}\subseteq
M_{i+j}$. Hence $\sigma\in Z^2_{\rm gr}(L,\Kr(\varphi))$.

Since $H^2_{\rm gr}(L,\Kr(\varphi))=0$, there is a linear map $\tau:L\to
\Kr(\varphi)$
such that $\sigma(x\wedge y)=\tau([x,y])$. Then the map $\psi=\eta+\tau$
is a Lie algebra homomorphism $\psi:L\to K$. Indeed,
\begin{eqnarray*}
\psi([x,y]))=\eta([x,y])+\tau([x,y])=[\eta(x),\eta(y)]=
[\eta(x)+\tau(x),\eta(y)+\tau(y)]\\
=[\psi(x),\psi(y)].
\end{eqnarray*}

Besides, $\varphi\circ\psi=\id_L$ since $\varphi\circ\tau=0$. Thus
$\psi$ is a splitting map for $\varphi$.
\end{proof}

The characterizations above enable us to restate
Corollaries~\ref{catequivJP} and \ref{catequivJTS} as follows:

\begin{theorema}
The category of Jordan pairs is equivalent to the category of $3$-graded
Lie algebras $L = L_{-1} \oplus L_0 \oplus L_1$ such that $L$ is
$0$-perfect and satisfies one of the equivalent conditions:
\begin{itemize}
\item[(i)]  $H_2^{\rm gr} (L) = 0$
\item[(ii)] $H^2 _{\rm gr} (L,M) = 0$ for every module $M$ with the
    trivial grading $M = M_0$.
\end{itemize}
\end{theorema}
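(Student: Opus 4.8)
\subsection*{Proof proposal}

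The plan is to obtain this statement by combining Corollary~\ref{catequivJP} with the homological characterization of Theorem~\ref{homolal}. By Corollary~\ref{catequivJP}, the functor $\uTKK$, with quasi-inverse $\forJP$, already yields an equivalence between $\JP$ and the category whose objects are the $3$-graded Lie algebras $L = \Ldecomp$ that are $0$-perfect and centrally $0$-closed. Hence it suffices to check that, for a $0$-perfect $3$-graded Lie algebra $L$ over a field $k$ of characteristic different from $2$ and $3$, being centrally $0$-closed is equivalent to each of the conditions~(i) and~(ii) of the present statement.

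First I would record the elementary fact, used repeatedly in Section~\ref{imageTKK}, that over a $0$-perfect algebra a splitting map of a central $0$-extension is automatically unique. Indeed, if $\varphi\colon K\to L$ is a central $0$-extension and $\psi_1,\psi_2\colon L\to K$ are two graded sections, then the difference $\theta:=\psi_1-\psi_2$ takes values in $\Kr(\varphi)\subseteq K_0\cap\Cen(K)$; being graded it vanishes on $L_{\pm1}$, and since $L_0=[L_{-1},L_1]$ and central summands drop out of brackets in $K$, it vanishes on $L_0$ as well, so $\psi_1=\psi_2$. Consequently, for a $0$-perfect $3$-graded Lie algebra the condition ``centrally $0$-closed'' (every central $0$-extension splits in a unique way) coincides with the condition that every central $0$-extension of $L$ splits, which is exactly condition~(i) of Theorem~\ref{homolal}.

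It then remains only to invoke Theorem~\ref{homolal}, according to which, for a $0$-perfect $3$-graded Lie algebra $L$, the statements ``every central $0$-extension of $L$ splits'', ``$H_2^{\rm gr}(L)=0$'', and ``$H^2_{\rm gr}(L,M)=0$ for every module $M$ with the trivial grading $M=M_0$'' are all equivalent; the last two are precisely conditions~(i) and~(ii) of the present statement. Putting this together with Corollary~\ref{catequivJP} gives the claimed equivalence of categories, with $\uTKK$ and $\forJP$ furnishing the mutually quasi-inverse functors.

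I do not expect a genuine obstacle here: the statement is a repackaging of two results already proved. The only point needing a line of care is the gradedness of the sections in the uniqueness argument above --- one checks, exactly as in the proofs of Section~\ref{imageTKK}, that any section of a graded central $0$-extension may be chosen to respect the grading (the off-diagonal part of an arbitrary section lands in the degree-zero central kernel and can be absorbed without destroying the homomorphism property), so the reduction loses no generality.
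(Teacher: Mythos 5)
Your proposal is correct and follows the paper's own route: Theorem A is obtained there precisely by restating Corollary~\ref{catequivJP} through the homological characterization of Theorem~\ref{homolal}, and the paper likewise disposes of the ``splits'' versus ``splits uniquely'' distinction by noting that $0$-perfectness forces uniqueness of a graded splitting. Your explicit check of that uniqueness (and of the gradedness of sections) is just a spelled-out version of what the paper uses implicitly, so nothing further is needed.
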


\begin{theoremb}
The category of Jordan triple systems is equivalent to the category of
$3$-graded Lie algebras $L = L_{-1} \oplus L_0 \oplus L_1$ with
involution such that $L$ is $0$-perfect and satisfies one of the
equivalent conditions:
\begin{itemize}
\item[(i)]  $H_2^{\rm gr} (L) = 0$
\item[(ii)] $H^2 _{\rm gr} (L,M) = 0$ for every module $M$ with the
    trivial grading $M = M_0$.
\end{itemize}
\end{theoremb}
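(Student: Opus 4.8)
The plan is to obtain Theorem~B by feeding the homological characterization of Theorem~\ref{homolal} into the categorical equivalence already established in Corollary~\ref{catequivJTS}. That corollary identifies $\JTS$ with the category of $0$-perfect, centrally $0$-closed $3$-graded Lie algebras with involution; recall that, as set up in Section~\ref{imageTKK}, the property ``centrally $0$-closed'' refers only to the underlying $3$-graded Lie algebra and makes no mention of the involution. Thus it suffices to show that, for a $0$-perfect $3$-graded Lie algebra $L$ over a field $k$ of characteristic $\ne 2,3$, being centrally $0$-closed is equivalent to each of conditions (i) and (ii) in the statement.

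First I would note that, since $L$ is $0$-perfect, any splitting of a central $0$-extension of $L$ is automatically unique: two splittings differ by a graded map of $L$ into the center that vanishes on $[L_{-1},L_1]=L_0$ and, being central, also vanishes after bracketing with $L_{\pm 1}$, hence is zero — this is the same observation used repeatedly in Section~\ref{imageTKK}. Consequently, for $0$-perfect $L$ the condition ``centrally $0$-closed'' coincides with condition (i) of Theorem~\ref{homolal}, namely that every central $0$-extension of $L$ splits. Theorem~\ref{homolal} then supplies the equivalences $\text{(i)}\Leftrightarrow H_2^{\rm gr}(L)=0 \Leftrightarrow H^2_{\rm gr}(L,M)=0$ for every module $M$ with $M=M_0$, which are precisely conditions (i) and (ii) of Theorem~B.

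Substituting these equivalent conditions for ``centrally $0$-closed'' in Corollary~\ref{catequivJTS} gives the statement. There is no real obstacle here beyond bookkeeping; the only points that warrant care are checking that the standing hypothesis of Section~\ref{homi} (namely that $k$ is a field of characteristic different from $2$ and $3$) is in force, and observing that the homological conditions are intrinsic invariants of the $3$-graded Lie algebra $L$, so that they transport across the equivalence of Corollary~\ref{catequivJTS} without any reference to the chosen involution.
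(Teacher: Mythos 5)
Your proposal is correct and is essentially the paper's own argument: Theorem~B is obtained there precisely by substituting the characterization of Theorem~\ref{homolal} into Corollary~\ref{catequivJTS}, the only subtlety being that ``splits'' versus ``splits uniquely'' is reconciled by $0$-perfectness, which you address just as the paper does (inside the proof of Theorem~\ref{homolal} and in Section~\ref{imageTKK}).
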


 Well-known facts on $H_2(L)$ and $H^2(L,M)$ and central extensions (see
 for example \cite[Chapter 7]{W}) together with
 Corollary~\ref{catequivJA} imply:

\begin{theoremc}
The category of unital Jordan algebras is equivalent to the category of
A$_1$-graded Lie algebras $L$ satisfying one of the equivalent
conditions:
\begin{itemize}
\item[(i)]  $H_2(L) = 0$
\item[(ii)] $H^2(L,M) = 0$ for every trivial module $M$.
\end{itemize}
\end{theoremc}



\begin{thebibliography}{99}
\bibitem{ML} S.\,Mac\,Lane, ``Categories for the Working
    Mathematician'', Springer-Verlag, New York, 1998.
\bibitem{W} C.\,A.\,Weibel, ``An Introduction to Homological Algebra'',
    Cambridge Studies in Adv. Math., Cambridge Univ. Press, vol. 38,
    Cambridge, 1994.
\bibitem{AG} B.\,N.\,Allison and Y.\,Gao, {Central quotients and
coverings of Steinberg unitary Lie algebras}, Canad.  J.  Math. {48}
(1996) 449--482.
\bibitem{ABG2} B.\,N.\,Allison, G.\,Benkart, and Y.\,Gao, {Lie algebras
    graded by the root systems BC$_r$, $r\ge 2$}, Memoir Amer. Math.
    Soc. {158}, vol. 751, 2002.
\bibitem{B} W.\,Bertram, The geometry of Jordan and Lie structures,
    Lecture Notes in Mathematics, vol. 1754, Springer-Verlag, Berlin,
    2000.
\bibitem{BM} S.\,Berman and R.\,V.\,Moody, {Lie algebras graded by finite
    root systems and the intersection matrix algebras of Slodowy},
    Invent. Math.~{108} (1992) 323--347.
\bibitem{BN} W.\,Bertram and K.-H.\, Neeb, {Projective completions of
    Jordan pairs. Part I. The genaralized projective geometry of a Lie
    Algebra}, J. of Algebra {277} (2004) 474--519.
\bibitem{BS} G.\,Benkart and O.\,Smirnov, {Lie algebras graded by the
    root system BC$_1$}, Journal of Lie Theory~{13} (2003) 91--132.
\bibitem{BZ} G.\,Benkart and E.\,Zelmanov, {Lie algebras graded by
    finite root systems and the intersection matrix algebras}, Invent.
    Math.~{126} (1996) 1--45.
\bibitem{K} V.\,G.\,Kac, {Some Remarks on Nilpotent Orbits}, J. of
    Algebra {64} (1980) 190--213.
\bibitem{L} O.\,Loos, {Jordan Pairs},
Lecture Notes in Math., vol.460, Springer-Verlag, New York, 1975.
\bibitem{M} K.\,McCrimmon, {A taste of Jordan algebras},
    Springer-Verlag, New York, 2004.
\bibitem{S} O.\,Smirnov, {Imbedding of Lie triple systems into Lie
    algebras}, submitted, J. of Algebra (preprint on arXiv:0906.1170).
\bibitem{T} S.\,Tan, {TKK algebras ond Vertex Operator representations},
    J. of Algebra {211} (1999) 298--342.
\bibitem{ZSSS} K.\,A.\,Zhevlakov, A.\,M.\,Slin'ko, I.\,P.\,Shestakov,
    A.\,I.\,Shirshov, ``Rings that are nearly associative'', Academic
    Press, New York, 1982.
\end{thebibliography}
\end{document}